\newcommand{\mylabel}[2]{#2\def\@currentlabel{#2}\label{#1}}
\DeclareMathOperator{\m}{m}
\newcommand{\A}{\alpha}
\DeclareMathOperator{\av}{avm}
\DeclareMathOperator{\Si}{S}
\DeclareMathOperator{\M}{M}
\newcommand{\suv}{\sum_{u: uv \in E(G)}}
\DeclareMathOperator{\ME}{ME}
\newcommand{\changefont}{%
    \fontsize{9}{12}\selectfont}
\newtheorem{thm}{Theorem}[section]
\newtheorem{prop}[thm]{Proposition}
\newtheorem{lem}[thm]{Lemma}
\newtheorem{cor}[thm]{Corollary}
\theoremstyle{remark}
\newtheorem{rem}[thm]{Remark}
\newtheorem*{thm*}{Theorem}
\newif\ifdetails
\newcommand{\DETAIL}[1]%
{\ifdetails\par\fbox{\begin{minipage}{0.9\linewidth}\textit{Detail:}
      #1\end{minipage}}\par\fi}
\newcommand{\TODO}[1]%
{\ifdetails\par\fbox{\begin{minipage}{0.9\linewidth}\textbf{TODO:}
      #1\end{minipage}}\par\fi}
\newcommand{\old}[1]{{}}
\title{The average size of matchings in graphs}
\author{Eric O. D. Andriantiana}
\address{Eric O. D. Andriantiana\\
Department of Mathematics (Pure and Applied)\\
Rhodes University, PO Box 94\\
6140 Grahamstown\\
South Africa}
\email{E.Andriantiana@ru.ac.za}
\author{Valisoa Razanajatovo Misanantenaina}
\address{Valisoa Razanajatovo Misanantenaina\\
Department of Mathematical Sciences\\
Stellenbosch University\\
Private Bag X1\\
Matieland 7602\\
South Africa}
\email{valisoa@sun.ac.za}
\author{Stephan Wagner}
\address{Stephan Wagner\\
Department of Mathematical Sciences\\
Stellenbosch University\\
Private Bag X1\\
Matieland 7602\\
South Africa}
\email{swagner@sun.ac.za}
\thanks{This work was supported by the National Research Foundation of South Africa (grants 96236 and 96310).}
\subjclass[2010]{Primary 05C35; secondary 05C05, 05C07}
\keywords{Independent sets, average size, trees, extremal problems}
\begin{document}

\begin{abstract}
In this paper, we consider the average size of independent edge sets, also called matchings, in a graph. We characterize the extremal graphs for the average size of matchings in general graphs and trees. In addition, we obtain inequalities between the average size of matchings and the number of matchings as well as the matching energy, which is defined as the sum of the absolute values of the zeros of the matching polynomial.  
\end{abstract}

\maketitle

\section{Introduction}
An independent vertex set in a graph is a set of vertices such that no two vertices are adjacent. An independent edge set, also called a matching, is a set of edges such that no two edges are adjacent. It is not surprising that these two concepts are closely related, an elementary example being the fact that a matching in a graph is an independent set in the corresponding line graph. Two popular graph invariants associated to these parameters are the Merrifield-Simmons index and the Hosoya index, which are the total number of independent sets and the total number of matchings respectively. Extremal problems, where one is looking for the maximum or minimum of an invariant in a specified class of graphs, have been studied quite thoroughly for both the Merrifield-Simmons index and the Hosoya index. It is straightforward that among all $n$-vertex graphs, the complete graph has the maximum Hosoya index and the minimum Merrifield-Simmons index, while on the other hand the empty graph has the minimum Hosoya index and the maximum Merrifield-Simmons index. Among $n$-vertex trees, the path and the star are extremal, and there are numerous other examples of graph classes where the graphs that minimize the Merrifield-Simmons index also maximize the Hosoya index, and vice versa \cite{wagner2017upper}.

In a recent paper \cite{average}, we were interested in extremal questions for the average size of independent sets of graphs rather than their number. This was partly inspired by the work of Jamison \cite{jamison1983,jamison1984} and later authors \cite{vince2010,haslegrave2014,wagner2016,Mol} on the average size of subtrees of trees. In the present paper, which complements our paper \cite{average}, we are concerned with the study of the average size of matchings in a graph. In view of the aforementioned relation between independent sets and matchings, we expect to get similar results as for the average size of independent sets. Indeed, we find that the graphs that minimize the average size of independent sets are also those that maximize the average size of matchings and vice versa in all instances that we treat. Specifically, it holds true for arbitrary graphs and trees of a prescribed size.

Finally, we also prove inequalities between the average size of matchings and the number of matchings as well as the matching energy of a graph, an invariant introduced in \cite{gutman4}.

\section{Preliminaries}

Let $G$ be a graph. A subset $A$ of $E(G)$ is called a matching of $G$ if the edges of $A$ do not share any common vertices. Let $\m(G,k)$ be the number of matchings of cardinality $k$ (also called $k$-matchings) in $G$. We use the following notation for the total number of matchings in $G$, the sum of the sizes of all matchings in $G$ and the average size of matchings in $G$:
\begin{align*}
\M(G) &= \sum_{k\geq 0}\m(G,k),\\
\Si(G) &= \sum_{k\geq 0}k\m(G,k),\\
\av(G) &= \frac{\Si(G)}{\M(G)}.
\end{align*}

The greatest cardinality of a matching in $G$ is called the matching number of $G$ and denoted by $\mu(G)$.

As examples, let us consider the $n$-vertex edgeless graph $E_n$ and the star $S_n$. We have 
$$\M(E_n)=1,\quad \M(S_n)=n, \quad \Si(E_n)=0,\quad \Si(S_n)=n-1$$
and hence
$$
\av(E_n)=0, \quad \av(S_n)=\frac{n-1}{n}.
$$

The following standard and well-known proposition gives us a recursion for the total number and size of matchings.
\begin{prop}\label{prop:recursions}
If $e=uv$ is an edge of $G$, then
\begin{equation} \label{eq11match}
\M(G)=\M(G-e)+\M(G-v-u)
\end{equation}
and
\begin{equation}
\Si(G)=\Si(G-e)+ \Si(G-v-u)+ \M(G-v-u)\label{eq2match}.
\end{equation}

Similarly, if $v$ is a vertex of $G$, then
\begin{equation} \label{eq1match}
\M(G)=\M(G-v)+\suv \M(G-v-u)
\end{equation}
and
\begin{equation} \label{eq1match}
\Si(G)=\Si(G-v)+\suv (\Si(G-v-u)+\M(G-v-u)).
\end{equation}
\end{prop}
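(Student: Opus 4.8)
The plan is to prove all four identities by the standard edge/vertex classification of matchings, the same way one proves the classical Hosoya recursions. I would first establish \eqref{eq11match}, since \eqref{eq2match} is essentially a weighted refinement of it, and similarly derive \eqref{eq1match} before its weighted counterpart.

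For the edge version, fix the edge $e=uv$ and partition the matchings of $G$ into two classes: those that do not contain $e$, and those that do. A matching not containing $e$ is precisely a matching of $G-e$, giving a bijection with the matchings of $G-e$. A matching $A$ containing $e$ can use no other edge incident to $u$ or $v$, so $A\setminus\{e\}$ is exactly a matching of $G-u-v$; this gives a bijection between the $e$-containing matchings of $G$ and the matchings of $G-u-v$. Summing the counts yields \eqref{eq11match}. For \eqref{eq2match} I would track sizes under the same partition. Matchings avoiding $e$ contribute $\Si(G-e)$ to the size-sum. Each matching containing $e$ has size one more than the corresponding matching of $G-u-v$, so these contribute $\sum_{B}(|B|+1)=\Si(G-u-v)+\M(G-u-v)$, where $B$ ranges over matchings of $G-u-v$; adding the two contributions gives \eqref{eq2match}.

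The vertex version is analogous but requires splitting the $v$-covering matchings more finely. Partition the matchings of $G$ according to whether $v$ is unmatched or matched. If $v$ is unmatched, the matching is a matching of $G-v$, accounting for the $\M(G-v)$ term and the $\Si(G-v)$ term. If $v$ is matched, say to a neighbour $u$ via the edge $uv$, then removing that edge leaves a matching of $G-u-v$, and conversely every matching of $G-u-v$ extends uniquely by $uv$; since distinct neighbours $u$ give disjoint classes, summing over $u$ with $uv\in E(G)$ produces the term $\suv \M(G-v-u)$ for the count and, again accounting for the extra edge in each matching, the term $\suv(\Si(G-v-u)+\M(G-v-u))$ for the size-sum.

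None of the steps presents a genuine obstacle, as each identity reduces to a clean bijective decomposition; the only point demanding mild care is the bookkeeping of the ``$+1$'' arising from the edge added back in the size-sum identities, and the observation that in the vertex case the matched neighbours yield a genuinely disjoint union so that a plain sum over $u$ is valid. I would therefore present the argument compactly, proving \eqref{eq11match} and \eqref{eq1match} in full and then remarking that \eqref{eq2match} and its vertex analogue follow by attaching the size weight $k$ to each matching and separating the contribution of the newly added edge.
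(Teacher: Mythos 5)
Your proposal is correct and follows exactly the same route as the paper's proof: classify matchings by whether they contain the edge $e$ (resp.\ an edge covering $v$), biject the two classes with matchings of $G-e$ and $G-u-v$ (resp.\ $G-v$ and the $G-v-u$), and account for the extra edge via the additional $\M$ term in the size-sum identities. The paper merely states this more tersely; no differences in substance.
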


\begin{proof}
A matching in $G$ either contains the edge $e$ or not. The number of matchings containing $e$ is $\M(G-v-u)$, and the number of those not containing $e$ is $\M(G-e)$. Hence, the first equation holds. The argument for the second equation is similar, with the last term taking the edge $e$ itself into account.

Using similar reasoning, the last two equations are obtained by distinguishing between matchings that do not contain an edge with $v$ as an endpoint and those that do contain such an edge.
\end{proof}

\begin{rem}
In particular, if $v$ is a leaf of a tree and $w$ its unique neighbor, we obtain the relations
$$\M(G)=\M(G-v)+\M(G-v-w)$$
and
$$\Si(G) = \Si(G-v) + \Si(G-v-w) + \M(G-v-w).$$
\end{rem}

Moreover, we have the following basic result on disjoint unions:

\begin{prop}
Let $G_1,G_2,\ldots,G_k$ be the connected components of a graph $G$. Then we have
$$\M(G) = \prod_{j=1}^k \M(G_j)$$
and
$$\Si(G) = \sum_{i=1}^k \Si(G_i) \prod_{\substack{j=1 \\ j \neq i}}^k \M(G_j) = \M(G) \sum_{i=1}^k \frac{\Si(G_i)}{\M(G_i)},$$
thus
$$\av(G) = \sum_{i=1}^k \av(G_i).$$
\end{prop}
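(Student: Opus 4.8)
The plan is to reduce everything to the case of two components and then induct on $k$. For two graphs $G_1$ and $G_2$ with disjoint vertex sets, the key combinatorial observation is that a matching of the union $G_1 \cup G_2$ decomposes uniquely as a disjoint pair $(A_1, A_2)$, where $A_i$ is a matching of $G_i$: since no edge of $G_1$ shares a vertex with an edge of $G_2$, independence across the union is automatic, so the matchings of $G_1 \cup G_2$ are exactly the disjoint unions of a matching of $G_1$ with a matching of $G_2$. This bijection is the heart of the argument.

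From this bijection, the product formula for $\M$ is immediate, since choosing a matching of the union amounts to independently choosing a matching in each component. For $\Si$, I would exploit the fact that the size of $A_1 \cup A_2$ is $|A_1| + |A_2|$. Summing the size over all matchings of the union, and using the bijection to write the sum as a double sum over matchings of $G_1$ and $G_2$, the total splits into two contributions: in one, $|A_1|$ is summed while $A_2$ ranges freely (contributing a factor $\M(G_2)$), and in the other the roles are reversed. Concretely,
\begin{equation*}
\Si(G_1 \cup G_2) = \Si(G_1)\M(G_2) + \M(G_1)\Si(G_2),
\end{equation*}
which is exactly the two-component instance of the stated formula. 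Dividing by $\M(G_1 \cup G_2) = \M(G_1)\M(G_2)$ gives $\av(G_1 \cup G_2) = \av(G_1) + \av(G_2)$.

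For general $k$, I would induct, writing $G = G_1 \cup (G_2 \cup \cdots \cup G_k)$ and applying the two-component result together with the induction hypothesis. The product formula for $\M$ follows directly. For the $\Si$ identity, the cleanest route is to observe that both the middle expression and the right-hand expression equal $\M(G)\sum_{i} \av(G_i)$: the second equality in the displayed line is just the definition $\av(G_i) = \Si(G_i)/\M(G_i)$ combined with $\M(G) = \prod_j \M(G_j)$, and the first equality follows by clearing denominators. The additivity of $\av$ is then the telescoping of the two-component case.

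I do not anticipate a genuine obstacle here; the only point requiring a little care is making the bookkeeping in the $\Si$ sum clean. The temptation is to manipulate the nested sums directly for all $k$ at once, which produces an awkward multi-index expression. I expect the induction on $k$ to sidestep this entirely, so the main work is simply stating the two-component bijection carefully and verifying the two-component $\Si$ formula, after which everything propagates mechanically.
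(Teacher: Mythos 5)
Your argument is correct and rests on exactly the same observation the paper uses, namely that every matching of $G$ decomposes uniquely into matchings of its connected components; the paper simply states this fact and leaves the bookkeeping implicit, whereas you spell out the two-component case and the induction. No changes needed.
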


\begin{proof}
This follows easily from the fact that every matching of $G$ decomposes uniquely into matchings of its connected components.
\end{proof}

\section{General graphs}

Unlike the total number of matchings $\M$, the average size of matchings $\av$ is not always a monotone function under addition of edges to the graph. 
For example, consider the tree in Figure \ref{fig1}. We have
\begin{figure}[htbp]
$$
\begin{tikzpicture}[scale=1]
\node[fill=black,circle,inner sep=1pt] (t1) at (0,0) {};
\node[fill=black,circle,inner sep=1pt] (t2) at (1,0) {};
\node[fill=black,circle,inner sep=1pt] (t3) at (2,0) {};
\node[fill=black,circle,inner sep=1pt] (t4) at (3,0) {};
\draw (t1)--(t2)--(t3)--(t4);
\node[fill=black,circle,inner sep=1pt] (t5) at (1,1) {};
\draw (t5)--(t2);
\node at (1.5,0.2) {$e_1$};
\node at (2.5,0.2) {$e_2$};
\end{tikzpicture}
$$
\caption{A tree $T$ and two of its edges.}
\label{fig1}
\end{figure}
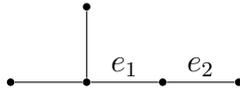

$$
\av(T-e_1)=\frac{7}{6}>\frac{8}{7}=\av(T),
\text{ but }
\av(T-e_2)=\frac{3}{4}<\frac{8}{7}=\av(T).
$$

However, we can make use of the following result obtained in \cite{average}:

\begin{thm}
\label{chap2.Thm:Gen}
Let $X$ be a nonempty finite set, and $\mathcal{P}(X)$ its powerset. For a set $\mathcal{A}\subseteq \mathcal{P}(X)$, 
we define
$$
av(\mathcal{A})=\frac{1}{|\mathcal{A}|}\sum_{A\in \mathcal{A}}|A|.
$$
Let $\mathcal{B}\subseteq \mathcal{P}(X)$, such that the cardinalities of the elements of $\mathcal{B}$ are not all the same and for every $x\in X$ there exists $B\in\mathcal{B}$ 
with $x\in B$. Then there exists $x_0\in X$ such that 
$$
av(\mathcal{B})>av\big(\mathcal{B}\cap \mathcal{P}(X-\{x_0\})\big).
$$
\end{thm}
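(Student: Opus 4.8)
The plan is to reformulate the desired inequality in terms of the members of $\mathcal{B}$ that contain a fixed element. Write $m = |\mathcal{B}|$ and $s = \sum_{B \in \mathcal{B}} |B|$, so that $av(\mathcal{B}) = s/m$. For a given $x \in X$, split $\mathcal{B}$ into the subfamily $\mathcal{B}_x = \{B \in \mathcal{B} : x \in B\}$ and its complement $\mathcal{B} \cap \mathcal{P}(X-\{x\})$, and set $m_x = |\mathcal{B}_x|$ and $s_x = \sum_{B \in \mathcal{B}_x} |B|$; the covering hypothesis guarantees $m_x \geq 1$. Then $av(\mathcal{B} \cap \mathcal{P}(X-\{x\})) = (s-s_x)/(m-m_x)$, and clearing denominators shows that
$$
av(\mathcal{B}) > av\big(\mathcal{B} \cap \mathcal{P}(X-\{x\})\big)
\quad\Longleftrightarrow\quad
\frac{s_x}{m_x} > \frac{s}{m};
$$
that is, the inequality we want holds at $x$ exactly when the average size of the sets containing $x$ exceeds the global average. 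As a byproduct I would record that a strict inequality $s_x/m_x > s/m$ forces $m_x < m$ (otherwise $\mathcal{B}_x = \mathcal{B}$ and $s_x/m_x = s/m$), so that the complementary family is automatically nonempty and its average is well-defined.

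The core of the argument is an averaging (double-counting) step over all $x \in X$. Summing $m_x$ counts each $B$ once per element it contains, giving $\sum_{x \in X} m_x = \sum_{B \in \mathcal{B}} |B| = s$, and likewise $\sum_{x \in X} s_x = \sum_{B \in \mathcal{B}} |B|^2$. Hence
$$
\sum_{x \in X} \Big( s_x - \tfrac{s}{m}\, m_x \Big) = \sum_{B \in \mathcal{B}} |B|^2 - \frac{s^2}{m}.
$$
By the Cauchy--Schwarz inequality, $\sum_{B \in \mathcal{B}} |B|^2 \geq \tfrac{1}{m}\big(\sum_{B \in \mathcal{B}} |B|\big)^2 = s^2/m$, and this inequality is strict precisely because the cardinalities $|B|$ are not all equal. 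Thus the right-hand side is strictly positive, so at least one summand is positive: there exists $x_0 \in X$ with $s_{x_0}/m_{x_0} > s/m$. By the equivalence above, this $x_0$ satisfies $av(\mathcal{B}) > av(\mathcal{B} \cap \mathcal{P}(X-\{x_0\}))$, and the byproduct observation ensures the right-hand average is meaningful.

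All the steps are elementary; the point requiring care is the strictness of the Cauchy--Schwarz bound, which is exactly where the hypothesis that the cardinalities are not all the same enters, together with the bookkeeping check that the chosen $x_0$ does not lie in every set of $\mathcal{B}$ (so that we are not dividing by zero). The main conceptual obstacle, I expect, is recognizing that the per-element inequality should be aggregated by summation over $X$ rather than attacked for each $x$ individually, and that the sum-of-squares identity reduces the whole statement to a single clean application of Cauchy--Schwarz.
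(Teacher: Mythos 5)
Your proof is correct and complete: the reduction of the desired inequality to $s_{x_0}/m_{x_0} > s/m$ (valid once $m_{x_0} < m$, which you rightly note follows from strictness), the double-counting identities $\sum_x m_x = s$ and $\sum_x s_x = \sum_B |B|^2$, and the strict Cauchy--Schwarz step using the hypothesis that the cardinalities are not all equal together give a clean averaging argument. Note that the paper itself does not prove this statement --- it is quoted from the companion paper \cite{average} on independent sets --- so there is no in-paper proof to compare against; your argument is the natural variance/double-counting one and stands as a valid self-contained proof.
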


Applying Theorem \ref{chap2.Thm:Gen}, with $\mathcal{B}$ being the set of matchings of $G$, we immediately obtain the following theorem.
\begin{thm}
If $G$ is a nonempty graph, then there exists an edge $e$ in $E(G)$ such that 
$$
\av(G-e)<\av(G).
$$
\end{thm}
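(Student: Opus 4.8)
The plan is to apply Theorem \ref{chap2.Thm:Gen} verbatim, taking $X = E(G)$ and letting $\mathcal{B}$ be the collection of all matchings of $G$, regarded as a subset of $\mathcal{P}(E(G))$. Under this identification a matching is exactly a subset $A \subseteq E(G)$ whose edges are pairwise non-adjacent, and the abstract average $av(\mathcal{B})$, which averages the cardinalities $|A|$ over all $A \in \mathcal{B}$, coincides with $\av(G)$ by definition.

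First I would verify the two hypotheses of Theorem \ref{chap2.Thm:Gen}. Since $G$ is nonempty it has at least one edge, so the empty set (a matching of size $0$) and any singleton $\{e\}$ (a matching of size $1$) both belong to $\mathcal{B}$; hence the cardinalities of the elements of $\mathcal{B}$ are not all equal. Moreover, every edge $e \in E(G)$ lies in the matching $\{e\} \in \mathcal{B}$, so the covering condition that for every $x \in X$ there exists $B \in \mathcal{B}$ with $x \in B$ is satisfied as well.

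Theorem \ref{chap2.Thm:Gen} then produces an element $x_0 \in X$, that is, an edge $e = x_0 \in E(G)$, with $av(\mathcal{B}) > av\bigl(\mathcal{B} \cap \mathcal{P}(X - \{x_0\})\bigr)$. The one point deserving a word of justification is the interpretation of the right-hand side: the elements of $\mathcal{B} \cap \mathcal{P}(E(G) \setminus \{e\})$ are precisely the matchings of $G$ that avoid the edge $e$, and these are in turn exactly the matchings of $G - e$, because deleting $e$ from $G$ does not alter which subsets of $E(G) \setminus \{e\}$ are pairwise non-adjacent. Consequently $av\bigl(\mathcal{B} \cap \mathcal{P}(X - \{x_0\})\bigr) = \av(G - e)$, and the inequality becomes $\av(G) > \av(G - e)$, which is the claim.

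Since Theorem \ref{chap2.Thm:Gen} is tailored to exactly this kind of application, I do not anticipate any genuine obstacle; the deduction is essentially a translation of the abstract statement into the language of matchings. The only slightly delicate point is confirming the two identifications $av(\mathcal{B}) = \av(G)$ and $\mathcal{B} \cap \mathcal{P}(X - \{x_0\}) = \{\text{matchings of } G - e\}$, both of which are immediate once one records that a matching avoiding $e$ is the same thing as a matching of the edge-deleted graph.
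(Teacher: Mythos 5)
Your proof is correct and is exactly the paper's argument: the paper likewise derives the theorem immediately from Theorem~\ref{chap2.Thm:Gen} by taking $\mathcal{B}$ to be the set of matchings of $G$. You simply spell out the routine verifications (the hypotheses of the abstract theorem and the identification of $\mathcal{B}\cap\mathcal{P}(E(G)\setminus\{e\})$ with the matchings of $G-e$) that the paper leaves implicit.
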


As an immediate consequence, we have the following corollary (which of course is also rather trivial without Theorem~\ref{chap2.Thm:Gen}).

\begin{cor}
For every $n$-vertex graph $G$ that is not the edgeless graph $E_n$, 
$0=\av (E_n) < \av(G)$. \label{lem1}
\end{cor}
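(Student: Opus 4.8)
The plan is to deduce the statement from the preceding theorem by an iterative edge-removal argument, so that the corollary really is an immediate consequence. Since $G$ is not the edgeless graph $E_n$, its edge set is nonempty; in particular $G$ is a nonempty graph, and the theorem applies to it.

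First I would build a finite strictly decreasing chain of graphs on the \emph{same} vertex set. Setting $G_0 = G$, as long as $G_i$ still has at least one edge I invoke the theorem to obtain an edge $e_i \in E(G_i)$ with $\av(G_i - e_i) < \av(G_i)$, and I put $G_{i+1} = G_i - e_i$. Because each step deletes exactly one edge while retaining all $n$ vertices, the number of edges drops by one at every stage, so after $m = |E(G)|$ steps the process terminates at a graph $G_m$ with no edges. As $G_m$ has the same vertex set as $G$, it is exactly $E_n$.

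Next I would simply chain the strict inequalities collected along the way, namely $\av(E_n) = \av(G_m) < \av(G_{m-1}) < \cdots < \av(G_0) = \av(G)$, and then invoke the value $\av(E_n) = 0$ computed earlier to conclude that $0 < \av(G)$, as claimed.

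There is essentially no serious obstacle here; the only point deserving a word of care is that edge deletion preserves the vertex set, which is what guarantees that the terminal edgeless graph is precisely $E_n$ and not an edgeless graph on fewer vertices. For completeness I would also record the elementary direct argument, which avoids the theorem altogether: any edge $uv$ of $G$ yields a matching $\{uv\}$ of size one, so $\Si(G) \geq 1$ while $\M(G) \geq 1$, whence $\av(G) = \Si(G)/\M(G) > 0$.
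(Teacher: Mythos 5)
Your proposal is correct and matches the paper's intent: the corollary is stated there as an immediate consequence of the edge-deletion theorem (with a parenthetical note that it is also trivial directly), and your iterated-deletion chain plus the elementary observation that any single edge forces $\Si(G)\geq 1$ are exactly the two routes the authors have in mind. No gaps.
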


One might wonder whether there is an analogous statement for adding edges. If it was possible to add an edge to every non-complete graph in such a way that the average matching size increases, it would follow immediately that complete graphs maximize the invariant $\av$. While the latter is true (as will be shown in the following), the analogue of Theorem~\ref{chap2.Thm:Gen} fails, as the example of a four-vertex cycle shows: when an edge $e$ is added to the cycle $C_4$, we have
$$\av(C_4) = \frac87 > \frac98 = \av(C_4 + e).$$

Thus we need another approach to show that the complete graph is still extremal. For this purpose, we first introduce some notation. 

In analogy to $\M(G)$, $\Si(G)$ and $\av(G)$, we define the following partial quantities for every nonnegative integer $k$:
\begin{align*}
\M_k(G) &=\sum_{i=0}^k\m(G,i),\\
\Si_k(G) &=\sum_{i=0}^ki\m(G,i),\\
\av_k(G) &=\frac{S_k(G)}{\M_k(G)}.
\end{align*}

We have the following lemmas.

\begin{lem}
\label{Lem:0}
For every nonnegative integer $k$ and every graph $G$, we have 
$$
\av_{k+1}(G)\geq \av_k(G).
$$
If $k\geq \mu(G)$, then 
$$
\av_{k+1}(G)= \av_k(G)=\av(G).
$$
\end{lem}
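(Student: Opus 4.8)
The plan is to prove the inequality $\av_{k+1}(G)\geq \av_k(G)$ directly from the definitions by comparing the two fractions $\Si_{k+1}(G)/\M_{k+1}(G)$ and $\Si_k(G)/\M_k(G)$, and then to argue that equality holds in the tail by noting that no new matchings appear once $k$ reaches the matching number. Since both quantities are ratios of partial sums that differ only by the single term corresponding to $(k+1)$-matchings, the natural approach is to clear denominators: the claimed inequality $\Si_{k+1}(G)\M_k(G)\geq \Si_k(G)\M_{k+1}(G)$ should reduce, after substituting $\Si_{k+1}=\Si_k+(k+1)\m(G,k+1)$ and $\M_{k+1}=\M_k+\m(G,k+1)$, to a tractable expression.

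First I would write out the cross-multiplied difference
\[
\Si_{k+1}(G)\M_k(G)-\Si_k(G)\M_{k+1}(G),
\]
expand using the two recursive identities above, and watch the $\Si_k(G)\M_k(G)$ terms cancel. What remains is $\m(G,k+1)$ times the quantity $(k+1)\M_k(G)-\Si_k(G)$. So the inequality will follow once I show that
\[
(k+1)\M_k(G)-\Si_k(G)\geq 0,
\]
together with the obvious fact that $\m(G,k+1)\geq 0$. This reduced inequality is transparent: writing it as $\sum_{i=0}^k (k+1-i)\m(G,i)$, every summand is nonnegative because $i$ ranges from $0$ to $k$, so $k+1-i\geq 1>0$. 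This is really the heart of the matter, and it is elementary once the cancellation is carried out; the only mild care needed is to confirm that $\M_k(G)>0$ for all $k$ (which holds since the empty matching always contributes $\m(G,0)=1$), so that the fractions $\av_k(G)$ are well defined and clearing denominators preserves the inequality direction.

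For the equality statement when $k\geq\mu(G)$, I would simply observe that by definition of the matching number there are no matchings of size exceeding $\mu(G)$, so $\m(G,i)=0$ for all $i>\mu(G)$. Consequently, for $k\geq\mu(G)$, the partial sums stabilize: $\M_k(G)=\M_{k+1}(G)=\M(G)$ and $\Si_k(G)=\Si_{k+1}(G)=\Si(G)$, whence $\av_{k+1}(G)=\av_k(G)=\Si(G)/\M(G)=\av(G)$. I do not anticipate a genuine obstacle here; the monotonicity is a clean algebraic identity plus a sign check, and the stabilization is immediate from the definition of $\mu(G)$. The one point worth stating carefully is the cancellation step, since that is where the recursive structure of $\Si_{k+1}$ and $\M_{k+1}$ does all the work and produces the single clean nonnegative term.
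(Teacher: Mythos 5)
Your proof is correct, and it simply fills in the details of the argument that the paper dismisses as ``straightforward from the definition'': the cross-multiplication reduces to $\m(G,k+1)\bigl((k+1)\M_k(G)-\Si_k(G)\bigr)\geq 0$, which holds termwise, and the stabilization for $k\geq\mu(G)$ is immediate. No gap; this is the intended argument, just written out in full.
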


\begin{proof}
This is straightforward from the definition of $\av_k$.
\end{proof}

\begin{lem}
\label{Lem:1}
For every $n$-vertex graph $G$ and every nonnegative integer $k$ such that $k < \mu(G)$, we have
$$
\frac{\m(K_n,k)}{\m(K_n,k+1)}\leq \frac{\m(G,k)}{\m(G,k+1)}.
$$
\end{lem}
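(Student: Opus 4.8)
The plan is to reduce the statement to a single clean inequality and then prove that inequality by double counting. First I would cross-multiply: since $k < \mu(G)$ and $\mu(K_n) = \lfloor n/2\rfloor \geq \mu(G) > k$, all four quantities $\m(G,k),\m(G,k+1),\m(K_n,k),\m(K_n,k+1)$ are strictly positive, so the claim is equivalent to
$$
\m(K_n,k)\,\m(G,k+1) \leq \m(K_n,k+1)\,\m(G,k).
$$
Then I would insert the explicit count $\m(K_n,k) = \frac{n!}{2^k k!\,(n-2k)!}$ (choose $2k$ vertices and pair them up), from which the ratio simplifies to $\frac{\m(K_n,k+1)}{\m(K_n,k)} = \frac{(n-2k)(n-2k-1)}{2(k+1)}$. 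The lemma thus reduces to the $K_n$-free inequality
$$
2(k+1)\,\m(G,k+1) \leq (n-2k)(n-2k-1)\,\m(G,k).
$$

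Next I would establish this last inequality by counting pairs $(M,e)$, where $M$ is a $k$-matching of $G$ and $e$ is an edge of $G$ sharing no vertex with $M$, so that $M\cup\{e\}$ is a $(k+1)$-matching. Counting these pairs by first choosing the $(k+1)$-matching $M' = M\cup\{e\}$ and then singling out one of its $k+1$ edges to play the role of $e$ gives exactly $(k+1)\,\m(G,k+1)$ pairs. Counting them the other way, for a fixed $k$-matching $M$ the admissible edges $e$ are precisely the edges of the induced subgraph $G - V(M)$ on the $n-2k$ vertices left uncovered by $M$, and there are at most $\binom{n-2k}{2} = \frac{(n-2k)(n-2k-1)}{2}$ of them. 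Summing over all $\m(G,k)$ choices of $M$ yields $(k+1)\,\m(G,k+1) \leq \binom{n-2k}{2}\,\m(G,k)$, which is the desired inequality after multiplying by $2$.

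In truth this argument is fairly direct, so there is no single serious obstacle; the work is mostly in getting the reduction right. The one conceptual point to notice is that the bound $|E(G-V(M))| \leq \binom{n-2k}{2}$ is exactly what forces $K_n$ to be extremal, since for $G = K_n$ the graph $G - V(M)$ is itself complete and the bound is attained with equality; this is a useful sanity check that the double count produces the sharp constant rather than a weaker one, and it explains why the complete graph appears on the smaller side of the inequality. The only care needed is the bookkeeping in the reduction step and the verification that all denominators are nonzero under the hypothesis $k < \mu(G)$.
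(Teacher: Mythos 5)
Your proof is correct and follows essentially the same route as the paper: the key step in both is the double count of pairs consisting of a $k$-matching and a disjoint edge, giving $(k+1)\,\m(G,k+1) \leq \binom{n-2k}{2}\,\m(G,k)$. The only cosmetic difference is that you compute the ratio $\m(K_n,k+1)/\m(K_n,k)$ from the closed formula for $\m(K_n,k)$, whereas the paper obtains the same ratio by applying the identical double-counting argument to $K_n$ itself (where it holds with equality).
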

\begin{proof}
Let $N$ be any $k$-matching of the complete graph $K_n$. When the $2k$ vertices that are covered by $N$ are removed, a complete graph on $n-2k$ vertices remains. Thus there are $\m(K_{n-2k},1) = \binom{n-2k}{2}$ possible ways to extend $N$ to a $(k+1)$-matching. Conversely, every $(k+1)$-matching can be obtained as an extension of $k+1$ different $k$-matchings. It follows that
$$\m(K_n,k+1) = \m(K_n,k) \cdot \frac{\m(K_{n-2k},1)}{k+1}.$$
Likewise, if $N$ is a $k$-matching of $G$ and $v(N)$ the set of vertices covered by $N$ in $G$, then there are $m(G - v(N))$ ways to extend $N$ to a $(k+1)$-matching of $G$. So by the same double-counting argument, we have
$$\m(G,k+1) = \frac{1}{k+1} \sum_{N:\,\text{$k$-matching of $G$}} \m(G - v(N),1).$$
Clearly, $\m(G - v(N),1) \leq \m(K_{n-2k},1)$ for all $k$-matchings $N$ (with equality if and only if $G - v(N)$ is complete), thus
$$\m(G,k+1) \leq \frac{1}{k+1} \cdot \m(G,k) \cdot \m(K_{n-2k},1),$$
and the desired inequality follows.
\end{proof}

\begin{rem}
Equality in Lemma~\ref{Lem:1} may hold for some (but not all) $k$ even if $G$ is not complete: for example, for the $4$-cycle $C_4$, we have
$$\frac{\m(K_4,2)}{\m(K_4,1)} = \frac36 = \frac24 = \frac{\m(C_4,2)}{\m(C_4,1)}.$$
\end{rem}
Lemma \ref{Lem:1} can easily be extended to the following lemma by induction:
\begin{lem}
\label{Lem:2}
For every $n$-vertex graph $G$ and for every pair of integers $k,l$ with $\mu(G)\geq k \geq l \geq 0$, we have 
$$
\frac{\m(K_n,l)}{\m(K_n,k)}\leq \frac{\m(G,l)}{\m(G,k)}
$$
and thus
$$
\frac{\M_{l}(K_n)}{m(K_n,k)} = \frac{\sum_{i=0}^l\m(K_n,i)}{\m(K_n,k)}\leq \frac{\sum_{i=0}^l\m(G,i)}{\m(G,k)} = \frac{\M_{l}(G)}{m(G,k)}.
$$
\end{lem}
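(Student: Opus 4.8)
The plan is to bootstrap Lemma~\ref{Lem:1}, which controls the consecutive ratios $\m(\cdot,i)/\m(\cdot,i+1)$, into a statement about arbitrary ratios $\m(\cdot,l)/\m(\cdot,k)$. The cleanest route is to write each such ratio as a telescoping product of consecutive ratios and to compare the two products factor by factor; this is exactly the induction on $k-l$ suggested by the phrasing of the lemma (with the case $k=l$, an empty product equal to $1$ on both sides, as the base). Before starting, I would record that all relevant denominators are positive: since $G$ has $n$ vertices, $\mu(G) \leq \mu(K_n) = \lfloor n/2\rfloor$, and any graph $H$ has at least one $i$-matching for each $0 \leq i \leq \mu(H)$ (delete edges from a maximum matching), so $\m(G,i) > 0$ and $\m(K_n,i) > 0$ for all $0 \leq i \leq k \leq \mu(G)$. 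This legitimises forming the quotients and writing
$$\frac{\m(K_n,l)}{\m(K_n,k)} = \prod_{i=l}^{k-1} \frac{\m(K_n,i)}{\m(K_n,i+1)}, \qquad \frac{\m(G,l)}{\m(G,k)} = \prod_{i=l}^{k-1} \frac{\m(G,i)}{\m(G,i+1)}.$$

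For the first inequality, I would observe that every index $i$ in the product range satisfies $l \leq i \leq k-1 < \mu(G)$, because $k \leq \mu(G)$. Hence Lemma~\ref{Lem:1} applies to each factor and gives $\m(K_n,i)/\m(K_n,i+1) \leq \m(G,i)/\m(G,i+1)$. As every factor is a ratio of positive quantities, multiplying these inequalities over $l \leq i \leq k-1$ yields $\m(K_n,l)/\m(K_n,k) \leq \m(G,l)/\m(G,k)$, which is the first claim.

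For the ``and thus'' part, I would apply the inequality just established with each integer $i$ in the range $0 \leq i \leq l$ in place of $l$, keeping $k$ fixed; the hypothesis $\mu(G) \geq k \geq i \geq 0$ still holds, so this is legitimate and gives $\m(K_n,i)/\m(K_n,k) \leq \m(G,i)/\m(G,k)$. Summing over $0 \leq i \leq l$ and using $\M_l(H) = \sum_{i=0}^{l}\m(H,i)$ then produces $\M_l(K_n)/\m(K_n,k) \leq \M_l(G)/\m(G,k)$, as desired.

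I do not anticipate a genuine obstacle: the argument is a routine amplification of Lemma~\ref{Lem:1}. The only points needing minor care are confirming the strict positivity of the $\m(\cdot,i)$ so that no division by zero occurs, and checking that the index condition $i < \mu(G)$ holds across the entire product, which is where the hypothesis $k \leq \mu(G)$ is used in an essential way.
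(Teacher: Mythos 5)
Your proof is correct and matches the paper's intended argument: the paper simply states that Lemma~\ref{Lem:1} ``can easily be extended\dots by induction,'' and your telescoping product of consecutive ratios (with the positivity check and the observation that each index $i\leq k-1<\mu(G)$ keeps Lemma~\ref{Lem:1} applicable) is exactly that induction, followed by the same term-by-term summation for the second inequality.
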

\begin{thm}
\label{Th:1}
For every $n$-vertex graph $G$ and every integer $k$ with $\mu(G)\geq k > 0$, we have
$$
\av_k(K_n)\geq \av_k(G),
$$
with equality if and only if $G$ is a complete graph.
\end{thm}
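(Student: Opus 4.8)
The plan is to clear denominators and reduce the claim to a single polynomial inequality in the matching numbers, which will then follow from the monotonicity provided by Lemma~\ref{Lem:2} together with a rearrangement argument. Since $0 < k \le \mu(G) \le \mu(K_n) = \lfloor n/2\rfloor$, all of $\m(G,i)$ and $\m(K_n,i)$ with $0 \le i \le k$ are strictly positive, so $\M_k(G)$ and $\M_k(K_n)$ are positive and the desired inequality $\av_k(K_n) \ge \av_k(G)$ is equivalent to
$$
\Si_k(K_n)\,\M_k(G) - \Si_k(G)\,\M_k(K_n) \ge 0.
$$
Writing $a_i = \m(K_n,i)$ and $b_i = \m(G,i)$, the left-hand side expands to $\sum_{i,j} i\,(a_i b_j - a_j b_i)$, and after antisymmetrizing in $i$ and $j$ it becomes $\tfrac12 \sum_{i,j}(i-j)(a_i b_j - a_j b_i)$, where both indices run over $0 \le i,j \le k$.

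The key point is to control the sign of $a_i b_j - a_j b_i$. Set $r_i = a_i/b_i = \m(K_n,i)/\m(G,i)$, which is well defined and positive for $0 \le i \le k$. Applying Lemma~\ref{Lem:2} to each pair $i \le j$ gives $\m(K_n,i)/\m(K_n,j) \le \m(G,i)/\m(G,j)$, that is $r_i \le r_j$; hence $i \mapsto r_i$ is nondecreasing on $\{0,1,\dots,k\}$. Since $a_i b_j - a_j b_i = b_i b_j (r_i - r_j)$, the difference above equals
$$
\tfrac12 \sum_{0 \le i,j \le k} (i-j)\,b_i b_j\,(r_i - r_j),
$$
and every summand is nonnegative because $b_i, b_j > 0$ while $(i-j)$ and $(r_i - r_j)$ share the same sign by monotonicity. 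This proves $\av_k(K_n) \ge \av_k(G)$. (Equivalently, one may read this as the nonnegativity of the covariance of the two comonotone functions $i \mapsto i$ and $i \mapsto r_i$ with respect to the weights $b_i$, i.e.\ a Chebyshev sum inequality.)

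For the equality discussion, the displayed sum vanishes if and only if every term does. For $i \ne j$ we have $i - j \ne 0$ and $b_i b_j > 0$, so equality forces $r_i = r_j$ for all $i,j$; that is, $r_i$ is constant on $\{0,1,\dots,k\}$. As $r_0 = \m(K_n,0)/\m(G,0) = 1$, this constant equals $1$, whence $\m(G,i) = \m(K_n,i)$ for every $0 \le i \le k$. Using $k \ge 1$ and taking $i = 1$ yields $|E(G)| = \m(G,1) = \m(K_n,1) = \binom{n}{2}$, so $G = K_n$; the converse is immediate. I expect the main step requiring care to be exactly the sign control: one must notice that Lemma~\ref{Lem:2} delivers genuine monotonicity of $r_i$ across all index pairs, not merely a comparison with the top index $k$, since this monotonicity is precisely what makes the rearranged sum termwise nonnegative and what pins down the equality case.
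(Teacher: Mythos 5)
Your proof is correct, but it takes a genuinely different route from the paper. The paper argues by induction on $k$: it verifies $\av_1(K_n)>\av_1(G)$ directly for non-complete $G$, then writes $\av_{k+1}$ as a weighted average of $k+1$ and $\av_k$ with weight governed by the ratio $\M_k/\m(\cdot,k+1)$, and uses the cumulative form of Lemma~\ref{Lem:2} together with the monotonicity of $x\mapsto \frac{(k+1)+\av_k(K_n)x}{1+x}$ to push the strict inequality from $k$ to $k+1$. You instead clear denominators and antisymmetrize, turning the claim into the termwise-nonnegative sum $\tfrac12\sum_{i,j}(i-j)b_ib_j(r_i-r_j)$, which is a Chebyshev/rearrangement inequality resting on the pairwise form of Lemma~\ref{Lem:2} (monotonicity of $r_i=\m(K_n,i)/\m(G,i)$); all the positivity claims you need ($\m(G,i)>0$ and $\m(K_n,i)>0$ for $i\le k\le\mu(G)\le\lfloor n/2\rfloor$) do hold, since sub-matchings of a $k$-matching realize every smaller size. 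Both proofs ultimately channel the same comparison lemma, but your version is non-inductive and yields the equality case very cleanly: vanishing of every term forces $r_i\equiv r_0=1$, hence $\m(G,1)=\binom{n}{2}$ and $G=K_n$, whereas the paper obtains strictness by carrying a strict inequality through the induction from the base case $k=1$. The trade-off is that the paper's fractional-linear manipulation generalizes more directly to the partial averages framework it reuses elsewhere, while your argument is shorter and more symmetric.
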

\begin{proof}
We only need to consider the case that $G$ is not complete. Note first that
$$
\av_1(K_n)=\frac{|E(K_n)|}{|E(K_n)|+1} > \frac{|E(G)|}{|E(G)|+1}=\av_1(G).
$$
The inequality holds because $\frac{x}{x+1}$ is an increasing function of $x$ on the interval $[0,\infty)$.

Assume that $\av_k(K_n) > \av_k(G)$ for some positive integer $k$, $k <\mu(G)$. Then we have $\m(k+1,G)\neq 0$ and 
\begin{align}
\av_{k+1}(K_n)
&=\frac{(k+1)\m(K_n,k+1)+\sum_{i=0}^ki\m(K_n,i)}{\m(K_n,k+1)+\sum_{i=0}^k\m(K_n,i)}\nonumber\\
&=\frac{(k+1)\m(K_n,k+1)+\Si_k(K_n)}{\m(K_n,k+1)+\M_k(K_n)}\nonumber\\
&=\frac{(k+1)\m(K_n,k+1)+\av_k(K_n)\M_k(K_n)}{\m(K_n,k+1)+\M_k(K_n)}\nonumber\\
\label{Eq:1}
&=\frac{(k+1)+\av_k(K_n)\frac{\M_k(K_n)}{\m(K_n,k+1)}}{1+\frac{\M_k(K_n)}{\m(K_n,k+1)}}.
\end{align}
Since $k+1>\av_k(K_n)$, $\frac{(k+1) + \av_k(K_n)x}{1+x}$ is decreasing as a function of 
$x$ on the interval $[0,\infty)$, so Lemma \ref{Lem:2} and \eqref{Eq:1} imply that
\begin{align}
\av_{k+1}(K_n)
&\geq \frac{(k+1)+\av_k(K_n)\frac{\M_k(G)}{\m(G,k+1)}}{1+\frac{\M_k(G)}{\m(G,k+1)}}.
\end{align}
Finally, using the induction hypothesis $\av_k(K_n) > \av_k(G)$, we obtain
\begin{align}
\av_{k+1}(K_n)
&> \frac{(k+1)+\av_k(G)\frac{\M_k(G)}{\m(G,k+1)}}{1+\frac{\M_k(G)}{\m(G,k+1)}}=\av_{k+1}(G).
\end{align}

\end{proof}

\begin{cor}
For every $n$-vertex graph $G$ we have $\av(K_n)\geq \av(G)$, with equality only if $G$ is a complete graph.
\end{cor}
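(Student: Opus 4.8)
The plan is to derive this corollary directly from Theorem~\ref{Th:1} together with the monotonicity and stabilization properties of $\av_k$ recorded in Lemma~\ref{Lem:0}. The only genuine point of care is that the matching numbers of $G$ and $K_n$ need not coincide, so I would evaluate the truncated quantity $\av_k$ at an index that is admissible for $G$ and then transfer back to $\av$ on both sides.

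First I would dispose of the degenerate case. If $G = E_n$ is edgeless then $\mu(G) = 0$ and $\av(G) = 0$, so the inequality $\av(K_n) \geq \av(G)$ is immediate, with strict inequality whenever $n \geq 2$ (and $K_1 = E_1$ is itself complete when $n = 1$). Hence I may assume $G$ is nonempty, so that $k := \mu(G) \geq 1$, which is precisely the range in which Theorem~\ref{Th:1} applies. For this choice of $k$, Theorem~\ref{Th:1} gives $\av_k(K_n) \geq \av_k(G)$, with equality if and only if $G$ is complete.

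It remains to pass from $\av_k$ to $\av$. On the side of $G$, since $k = \mu(G)$, Lemma~\ref{Lem:0} yields $\av_k(G) = \av(G)$. On the side of $K_n$, I would use that $\mu(K_n) = \lfloor n/2 \rfloor \geq \mu(G) = k$, so the monotonicity clause of Lemma~\ref{Lem:0} gives $\av(K_n) = \av_{\mu(K_n)}(K_n) \geq \av_k(K_n)$. Chaining these relations produces
$$\av(K_n) \geq \av_k(K_n) \geq \av_k(G) = \av(G),$$
which is the desired inequality. For the equality statement, I would observe that if $\av(K_n) = \av(G)$, then every inequality in the chain must be an equality; in particular $\av_k(K_n) = \av_k(G)$, and the equality clause of Theorem~\ref{Th:1} forces $G$ to be complete.

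I do not expect any real obstacle here: all the work has been done in Theorem~\ref{Th:1}, and the only subtlety is reconciling $\av$ with $\av_k$ at a common admissible index across two graphs with different matching numbers, which is exactly what Lemma~\ref{Lem:0} is tailored to handle.
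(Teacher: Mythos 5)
Your proof is correct and follows essentially the same route as the paper: the chain $\av(K_n)=\av_{\lfloor n/2\rfloor}(K_n)\geq \av_{\mu(G)}(K_n)\geq \av_{\mu(G)}(G)=\av(G)$ via Theorem~\ref{Th:1} and Lemma~\ref{Lem:0} is exactly the paper's argument. Your separate treatment of the edgeless case (where Theorem~\ref{Th:1} does not apply since it requires $k>0$) is a small point of care that the paper leaves implicit.
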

\begin{proof}
Theorem \ref{Th:1} and Lemma \ref{Lem:0} give us
$$
\av(K_n)=\av_{\lfloor n/2\rfloor}(K_n)\geq \av_{\mu(G)}(K_n)\geq \av_{\mu(G)}(G)=\av(G).
$$
\end{proof}

\begin{rem}
While there is no simple explicit formula for $\av(K_n)$, it can be expressed in terms of the number of matchings in complete graphs. Every edge of the complete graph $K_n$ is contained in $\M(K_{n-2})$ matchings, thus we have $\Si(K_n) = \binom{n}{2} \M(K_{n-2})$ and consequently
$$\av(K_n) = \frac{\Si(K_n)}{\M(K_n)} = \binom{n}{2} \frac{\M(K_{n-2})}{\M(K_n)}.$$
A relatively simple asymptotic formula can be provided as well. There is a straightforward bijection between matchings of $K_n$ and involutions of an $n$-element set (a permutation is called an involution if it is equal to its own inverse, or equivalently if all cycles are of length $1$ or $2$). Thus the number of matchings of $K_n$ is the same as the number of involutions of an $n$-element set, for which there is a well-known asymptotic formula (see \cite[Proposition VIII.2]{Flajolet}):
$$\M(K_n) \sim \frac{1}{\sqrt{2}} n^{n/2} e^{-n/2+\sqrt{n}-1/4}.$$
It follows that
$$\av(K_n) \sim \frac{n}{2}$$
as $n \to \infty$.
\end{rem}

\section{Trees}

In this section, we will be concerned with trees. Our main goal is to determine the maximum and minimum of $\av(T)$ when $T$ is a tree with $n$ vertices.
Let us first consider the problem of minimizing the average size of matchings. As it turns out, the minimum for trees is also the minimum for connected graphs in general.

\begin{thm}
For every connected $n$-vertex graph, $\av(S_n) \leq \av(G)$, with equality only if $G$ is a star.
\end{thm}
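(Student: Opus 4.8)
The plan is to translate the claim into a polynomial inequality in the matching numbers $\m(G,k)$ and to exploit the connectivity of $G$ through the single estimate $\m(G,1) = |E(G)| \ge n-1$.

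First I would record that $\M(S_n) = n$ and $\Si(S_n) = n-1$, so $\av(S_n) = (n-1)/n$. Since $\M(G) > 0$, the inequality $\av(S_n) \le \av(G)$ is equivalent to $(n-1)\M(G) \le n\Si(G)$. Expanding both sides in terms of the matching counts gives
$$n\Si(G) - (n-1)\M(G) = \sum_{k \ge 0}\bigl(nk - (n-1)\bigr)\m(G,k).$$

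The heart of the argument is the observation that the coefficient $nk - (n-1) = n(k-1)+1$ is strictly negative only for $k = 0$, where it equals $-(n-1)$, while for every $k \ge 1$ it is at least $1$. As $\m(G,0) = 1$, the $k=0$ term contributes exactly $-(n-1)$; every remaining term is nonnegative, and in particular the $k=1$ term equals $\m(G,1) = |E(G)|$. Because $G$ is connected on $n$ vertices, $|E(G)| \ge n-1$, so
$$n\Si(G) - (n-1)\M(G) \ge -(n-1) + \m(G,1) \ge -(n-1) + (n-1) = 0,$$
which is the desired inequality. This is exactly the place where connectivity enters, and it is the only place.

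For the equality characterization I would track when both estimates above are tight. Equality forces $\m(G,k) = 0$ for all $k \ge 2$, i.e. $\mu(G) \le 1$, and also $|E(G)| = n-1$, i.e. $G$ is a tree. A tree on $n \ge 2$ vertices with matching number at most $1$ cannot contain two independent edges, and hence must be a star: any tree other than a star has diameter at least $3$ and therefore contains a path on four vertices, which already carries a $2$-matching. I do not anticipate a serious obstacle; the only point requiring care is the equality analysis, namely verifying that these two tightness conditions together pin down $G = S_n$, together with a quick check of the trivial small cases $n \le 2$. The argument is considerably lighter than the extremal result for general graphs, since here the entire difficulty collapses into the elementary edge-count bound $|E(G)| \ge n-1$.
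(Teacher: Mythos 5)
Your proof is correct, and it takes a different route from the paper's. The paper argues by a threshold comparison: it notes $\av(S_n) = \frac{n-1}{n} < 1$ and claims that every other connected $n$-vertex graph except $K_3$ satisfies $\av(G) \geq 1$, because such a graph contains a $2$-matching (equivalently, $\Si(G) - \M(G) = \sum_{k \geq 2}(k-1)\m(G,k) - 1 \geq 0$), with $K_3$ checked separately since $\av(K_3) = \frac34 > \frac23$. You instead prove the exact inequality $n\Si(G) \geq (n-1)\M(G)$ by expanding $n\Si(G) - (n-1)\M(G) = \sum_{k}(nk-(n-1))\m(G,k)$ and observing that the only negative contribution, $-(n-1)$ from $k=0$, is already absorbed by the $k=1$ term $\m(G,1) = |E(G)| \geq n-1$. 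This buys you a uniform argument with no special case for $K_3$ and no appeal to the existence of a $2$-matching, and it makes the equality analysis transparent: equality forces $|E(G)| = n-1$ and $\mu(G) \leq 1$, which together pin down the star. The paper's version is shorter to state but leans on the (true, though unproved there) classification of connected graphs with matching number one as stars and triangles; yours is slightly longer but entirely self-contained.
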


\begin{proof}
We have shown earlier that $\av(S_n)=\frac{n-1}{n}<1$. However any other connected graph $G$ (except for the complete graph $K_3$, for which $\av(K_3)=\frac34 > \frac23$) on $n$ vertices satisfies $\av(G)\geq 1$, since it possesses matchings of size greater than $1$, which make up for the empty set.

\end{proof}

The maximization problem requires more effort. Note that
the line graph of the $n$-vertex path $P_n$ is the $(n-1)$-vertex path $P_{n-1}$. This implies that the matchings of $P_{n}$ can be identified with the independent sets of $P_{n-1}$. Thus, the average size of matchings of $P_{n}$ is the same as the average size of the independent sets of $P_{n-1}$. A formula for this average size was determined in \cite{average}, where it was also shown that this average is in fact the minimum among trees of the same size.

\begin{lem}\label{lem:path}
The average size of matchings of the $n$-vertex path $P_n$ is
\begin{equation}\label{eq:tP_n}
\av(P_n)= \frac{5 - \sqrt{5}}{10} n + \frac{1-\sqrt{5}}{10} - \frac{n+1}{\sqrt{5}((-\phi^2)^{n+1}-1)},
\end{equation}
where $\phi = \frac{\sqrt{5}+1}{2}$ is the golden ratio. In particular,
\begin{enumerate}
\item[(a)] $\displaystyle \lim_{n \to \infty} \av(P_n) - \frac{5 - \sqrt{5}}{10} n = \frac{1-\sqrt{5}}{10},$
\item[(b)] $\displaystyle \av(P_n) \leq \frac{5-\sqrt{5}}{10} n + \frac{1}{\sqrt{5}} - \frac12$, with equality only for $n = 2$. For all positive integers $n \neq 2$, we even have $\displaystyle \av(P_n) \leq \frac{5-\sqrt{5}}{10} n + \frac{2}{\sqrt{5}} - 1$.
\end{enumerate}
\end{lem}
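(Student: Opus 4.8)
The plan is to work directly with the leaf-deletion recursions of Proposition~\ref{prop:recursions}. Writing $a_n = \M(P_n)$ and $b_n = \Si(P_n)$ and deleting a leaf of $P_n$ together with its neighbor, the remark following Proposition~\ref{prop:recursions} gives
\begin{equation*}
a_n = a_{n-1} + a_{n-2}, \qquad b_n = b_{n-1} + b_{n-2} + a_{n-2},
\end{equation*}
with $a_0 = a_1 = 1$ and $b_0 = b_1 = 0$. Thus $a_n$ is a Fibonacci number, and Binet's formula gives $a_n = \frac{\phi^{n+1} - \psi^{n+1}}{\sqrt5}$ with $\psi = \frac{1-\sqrt5}{2} = -\phi^{-1}$. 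Since $\av(P_n) = b_n / a_n$, everything reduces to finding a closed form for $b_n$. (Equivalently, one could invoke the line-graph identification of matchings of $P_n$ with independent sets of $P_{n-1}$ and quote the corresponding formula from \cite{average}, but the recursions are self-contained.)

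To solve the inhomogeneous recurrence for $b_n$ I would pass to generating functions. From the first recursion one gets $A(x) := \sum_n a_n x^n = (1-x-x^2)^{-1}$, and feeding this into the recursion for $b_n$ yields $B(x) := \sum_n b_n x^n = \frac{x^2}{(1-x-x^2)^2}$. The double pole at each root of $1-x-x^2$ produces a term growing linearly in $n$; a partial-fraction expansion (equivalently, since the forcing term $a_{n-2}$ already solves the homogeneous equation, a resonant ansatz $n(\alpha\phi^n + \beta\psi^n)$) gives $b_n$ explicitly. Forming $\av(P_n) = b_n/a_n$ and simplifying with the identities $\phi\psi = -1$, $\phi+\psi = 1$ and $\phi^2 = \phi+1$ should collapse it to the form \eqref{eq:tP_n}, where the combination $(-\phi^2)^{n+1}$ arises precisely as $(\phi/\psi)^{n+1}$. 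I expect this algebraic simplification to be the main grind; the exact constants $\frac{5-\sqrt5}{10}$ and $\frac{1-\sqrt5}{10}$ serve as useful checks along the way.

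Part (a) is then immediate: since $\phi^2 > 1$ we have $\lvert(-\phi^2)^{n+1}\rvert \to \infty$, so the last term of \eqref{eq:tP_n} tends to $0$ and the stated limit follows.

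For part (b) I would analyse the correction term $c_n = -\frac{n+1}{\sqrt5\,((-\phi^2)^{n+1}-1)}$ according to the parity of $n$. When $n$ is odd, $(-\phi^2)^{n+1} = \phi^{2(n+1)} > 1$, so $c_n < 0$ and $\av(P_n) - \frac{5-\sqrt5}{10}n < \frac{1-\sqrt5}{10}$, which already lies below both claimed bounds. When $n$ is even, $(-\phi^2)^{n+1} = -\phi^{2(n+1)}$, so $c_n = \frac{n+1}{\sqrt5\,(\phi^{2(n+1)}+1)} > 0$; here the key step is to show that $c_n$ is decreasing along the even integers, which follows because the numerator grows linearly while the denominator grows like $\phi^{2(n+1)}$. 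Consequently the maximum over even $n$ is attained at $n = 2$ and the maximum over even $n \geq 4$ at $n = 4$. Evaluating there — where one checks directly that $\av(P_2) = \frac12$ and $\av(P_4) = 1$ — shows that the values $\frac1{\sqrt5}-\frac12$ and $\frac2{\sqrt5}-1$ are attained with equality exactly at $n=2$ and $n=4$, which yields the two stated inequalities. The monotonicity of $c_n$ on the even integers is the only mildly delicate point.
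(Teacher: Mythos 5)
Your proposal is correct, and on parts (a) and (b) it coincides with what the paper does: the paper's proof of (b) likewise rests on the observation that the final term of \eqref{eq:tP_n} alternates in sign with the parity of $n$ and decays in magnitude, so that the maxima of $\av(P_n)-\frac{5-\sqrt5}{10}n$ over even $n$ (respectively even $n\ge 4$) occur at $n=2$ and $n=4$, where equality is checked directly. Where you genuinely diverge is in establishing the formula \eqref{eq:tP_n} itself: the paper simply imports it from \cite{average} via the identification of matchings of $P_n$ with independent sets of the line graph $P_{n-1}$, whereas you rederive it from scratch using the leaf-deletion recursions $\M(P_n)=\M(P_{n-1})+\M(P_{n-2})$ and $\Si(P_n)=\Si(P_{n-1})+\Si(P_{n-2})+\M(P_{n-2})$, which give $A(x)=(1-x-x^2)^{-1}$ and $B(x)=x^2(1-x-x^2)^{-2}$, and then extracting $b_n$ from the double poles. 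Your intermediate claims check out (the coefficients $1,2,5$ of $x^2,x^3,x^4$ in $B(x)$ match $\Si(P_2),\Si(P_3),\Si(P_4)$, and \eqref{eq:tP_n} evaluates to $0,\tfrac12,1$ at $n=1,2,4$, with $\phi/\psi=-\phi^2$ as you note). The self-contained derivation buys independence from the companion paper at the price of the partial-fraction algebra; the paper's route is shorter but asks the reader to translate the independent-set formula through the line-graph correspondence. The one point you flag as delicate -- monotonicity of $\frac{n+1}{\sqrt5(\phi^{2(n+1)}+1)}$ along even $n$ -- does go through, since each step of $2$ multiplies the denominator by at least $\phi^4/2>3$ while the numerator grows by a factor of at most $5/3$.
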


\begin{proof}
The formula for $\av(P_n)$ is taken from~\cite{average} (using the aforementioned correspondence between matchings of $P_n$ and independent sets of $P_{n-1}$). The limit in (a) is a straightforward consequence. For (b), one only needs to note that the sign of the final term in \eqref{eq:tP_n} alternates, and that its absolute value is decreasing in $n$ (see also~\cite{average}).
\end{proof}

For ease of notation, we set $a= \frac{5-\sqrt{5}}{10} \approx 0.27639320$ and $c_n = \av(P_n) - an$. Table \ref{tab1} gives values of $c_n$ for small $n$.

\begin{table}
\centering
\begin{tabular}{|c|c|c|c|}
\hline
$n$ & 1 & 2 & 3 \\
\hline
$c_n$ & $\frac{\sqrt{5}}{10} - \frac12 \approx -0.2764$ & $\frac{1}{\sqrt{5}} - \frac12 \approx -0.0528$ & $\frac{3}{2\sqrt{5}} - \frac56 \approx -0.1625$  \\
\hline
$n$  & 4 & 5 & 6 \\
\hline
$c_n$ & $\frac{2}{\sqrt{5}} - 1 \approx -0.1056$ & $\frac{\sqrt{5}}{2} - \frac{5}{4} \approx -0.1320$ & $\frac{3}{\sqrt{5}} - \frac{19}{13} \approx -0.1199$ \\
\hline
\end{tabular}
\vspace{0.5cm}

\caption{Values of $c_1,c_2,\ldots,c_5$.}
\label{tab1}
\end{table}

Before we prove the main result of this section, we require one more lemma:

\begin{lem}\label{lem:quot1}
For every tree $T$ and every vertex $v$ of $T$, we have
$$\frac{1}{1+ d(v)}\leq \frac{\M(T-v)}{\M(T)} \leq 1,$$
where $d(v)$ denotes the degree of $v$.
\end{lem}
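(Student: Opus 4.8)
The plan is to prove the two bounds in Lemma~\ref{lem:quot1} separately, using the matching recursion \eqref{eq1match} applied at the vertex $v$. Recall that for any vertex $v$ we have
\begin{equation*}
\M(T)=\M(T-v)+\suv \M(T-v-u).
\end{equation*}

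\textbf{Upper bound.} Since every term in the sum $\suv \M(T-v-u)$ is nonnegative, the recursion immediately yields $\M(T)\geq \M(T-v)$, which is exactly $\frac{\M(T-v)}{\M(T)}\leq 1$. This direction is essentially trivial and holds for arbitrary graphs, not just trees.

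\textbf{Lower bound.} Here I would rewrite the desired inequality $\frac{1}{1+d(v)}\leq \frac{\M(T-v)}{\M(T)}$ equivalently as $\M(T)\leq (1+d(v))\,\M(T-v)$. Substituting the recursion, this reduces to showing
\begin{equation*}
\suv \M(T-v-u)\leq d(v)\,\M(T-v).
\end{equation*}
Since the sum on the left has exactly $d(v)$ terms (one for each neighbor $u$ of $v$), it suffices to prove that each individual term satisfies $\M(T-v-u)\leq \M(T-v)$. This last inequality is again an instance of the upper bound already established: $T-v-u$ is obtained from $T-v$ by deleting the vertex $u$, so $\M\big((T-v)-u\big)\leq \M(T-v)$ follows from the monotonicity $\M(H-w)\leq \M(H)$ applied to $H=T-v$ and $w=u$. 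Summing over all $d(v)$ neighbors gives the claimed bound.

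The main thing to verify carefully is simply that the monotonicity $\M(H-w)\leq \M(H)$ holds in the required form, which is just the upper bound reused; there is no real obstacle, since the tree hypothesis is not even needed for either inequality (the result holds for all graphs). The only subtlety worth a remark is the equality analysis: the lower bound is tight precisely when $\M(T-v-u)=\M(T-v)$ for every neighbor $u$, forcing $T-v$ to have no edges incident to any neighbor of $v$, while the upper bound is tight only when $v$ is isolated. For trees this pins down the extremal configurations, but since the statement only asserts the inequalities, the bare two-step argument above suffices.
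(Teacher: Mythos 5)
Your proof is correct and follows essentially the same route as the paper: both use the recursion $\M(T)=\M(T-v)+\suv \M(T-v-u)$ together with the monotonicity $\M(T-v-u)\leq \M(T-v)$ (which the paper justifies by the subgraph observation, as you do in substance) to get the lower bound, and the trivial subgraph/nonnegativity observation for the upper bound. No issues.
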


\begin{proof}
Note first that $\M(T) = \M(T-v) + \suv \M(T-v-u)$. Since $T-v-u$ is a subgraph of $T-v$, we have $\M(T-v-u) \leq \M(T-v)$, hence $(1+d(v))\M(T-v) \geq \M(T)$, which proves the first inequality. The second inequality simply follows from the fact that $T-v$ is a subgraph of $T$, so matchings of $T - v$ are also matchings of $T$.
\end{proof}

\begin{thm}
For every tree $T$ of order $n$ that is not a path, we have the inequality $\av(T) \leq an + b$, where $b = (7\sqrt{5}-17)/10 \approx -0.13475241$. Consequently, the path maximizes the value of $\av(T)$ among all trees of order $n$.
\end{thm}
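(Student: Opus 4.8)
The plan is to prove the bound $\av(T)\le an+b$ for every non-path tree by induction on $n$, and then to read off maximality of the path by comparing $b$ with the path constants $c_n$. For the inductive step I would take a leaf $v$ at the end of a longest path of $T$ and let $w$ be its neighbor, of degree $d$. The longest-path property forces $T-v-w$ to split into $d-2$ isolated vertices and a single subtree $B$ on $n-d$ vertices, so that $\av(T-v-w)=\av(B)$ and $\M(T-v-w)=\M(B)$. Proposition~\ref{prop:recursions} then gives the weighted-average identity $\av(T)=\lambda\,\av(T-v)+(1-\lambda)\bigl(\av(B)+1\bigr)$ with $\lambda=\M(T-v)/\M(T)\in[\tfrac12,1]$ (Lemma~\ref{lem:quot1}). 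Running the same recursion for $\M$ shows $\M(T-v)=(d-1)\M(B)+\M(B-p_2)$, where $p_2$ is the continuation of the path, hence $\lambda=1/(1+\rho)$ with $\rho=\M(B)/\M(T-v)\le 1/(d-1)$.

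The easy case is $d\ge3$, where $\rho\le\tfrac12$ and hence $\lambda\ge\tfrac23$. Here I would feed the universal bound $\av(S)\le a\,|V(S)|+c^{\ast}$ with $c^{\ast}=\tfrac1{\sqrt5}-\tfrac12$ — valid for every smaller tree $S$, by Lemma~\ref{lem:path}(b) for paths and by the induction hypothesis for smaller non-paths (since $b<c^{\ast}$) — into the identity for $S=T-v$ and $S=B$. A direct computation using only $\lambda\ge\tfrac23$ then yields $\av(T)\le an+b$ with room to spare. The reason this works is that $\tfrac23$ exceeds the threshold $1/\phi$ that appears next, so the strong (non-path) hypothesis is not even needed in this case.

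The hard part will be the case $d=2$, where $T$ ends in a pendant path of length at least two and $\lambda$ may drop all the way to $\tfrac12$. Tracing through the identity, the crude estimate closes only when $\lambda\ge (1-2a)/(1-a)=1/\phi=(\sqrt5-1)/2$, and the band $\lambda\in[\tfrac12,1/\phi)$ is genuinely delicate. This is no accident, since $1/\phi$ is exactly the limiting matching-ratio $\M(P_{m-1})/\M(P_m)$ of long paths, so the extremal behaviour of the path is being probed precisely here. To push through I would peel the pendant $P_2$ and use the resulting three-term recursion
\[
\M(T)=2\M(T')+\M(T'-p_2),\qquad \Si(T)=2\Si(T')+\Si(T'-p_2)+\M(T'-p_2)+\M(T'),
\]
where $T'=T-v-w$, which mirrors the Fibonacci recursion of paths. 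The precise value $b=(7\sqrt5-17)/10$ must be calibrated so that this step is self-sustaining, and I expect that closing it requires strengthening the inductive hypothesis to control jointly $\av(T')$ and the matching-count ratio $\M(T'-p_2)/\M(T')$ (equivalently, to carry a lower bound on $\lambda$ along the induction); the base cases are the smallest non-path trees, together with the bottoming-out of the pendant path at its branch vertex.

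Once the bound is in place, maximality of the path is immediate: since $b=(7\sqrt5-17)/10$ lies strictly below each $c_n$ for $n\ge4$ (the minimum among these being $c_5=\tfrac{\sqrt5}{2}-\tfrac54$), and there is no non-path tree on fewer than four vertices, one obtains $\av(T)\le an+b<an+c_n=\av(P_n)$ for every non-path tree $T$ of order $n$, so that the path is the unique maximizer. The main obstacle throughout is the golden-ratio-critical regime $d=2$, and in particular finding the right coupled quantity to carry through the induction there.
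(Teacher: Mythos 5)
Your decomposition at the end of a longest path is sound, and your $d\ge 3$ case does close: with $\lambda=\M(T-v)/\M(T)\ge (d-1)/d\ge \tfrac23$ and the universal bound $\av(S)\le a|V(S)|+\tfrac1{\sqrt5}-\tfrac12$ for all smaller trees, the identity $\av(T)=\lambda\,\av(T-v)+(1-\lambda)(1+\av(B))$ does give $\av(T)\le an+b$ with margin. The final deduction (comparing $b$ with $\min_{n\ge 4}c_n=c_5=\tfrac{\sqrt5}{2}-\tfrac54>b$) is also correct. But the proof has a genuine gap exactly where you flag it: the case $d=2$ is not proved, only diagnosed. You correctly compute that the naive estimate needs $\lambda\ge 1/\phi$ while the decomposition only guarantees $\lambda\ge\tfrac12$, and you then say you ``expect'' that a strengthened induction hypothesis coupling $\av(T')$ with the ratio $\M(T'-p_2)/\M(T')$ will close it --- without formulating that hypothesis, verifying it is inductively self-sustaining, or checking its base cases. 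Since $1/\phi$ is precisely the limiting ratio $\M(P_{m-1})/\M(P_m)$, this regime is where the path's extremality actually lives; a tree ending in a long pendant path pushes $\lambda$ arbitrarily close to $1/\phi$ from below, so no amount of numerical slack in $b$ rescues the single-leaf-peeling induction as stated. The crux of the theorem is therefore missing.

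For comparison, the paper sidesteps this by decomposing not at a leaf but at a vertex $v$ of degree $k\ge 3$ (which exists since $T$ is not a path), deleting the edge $e$ to the smallest branch $T_k$ and writing $\av(T)$ as a convex combination of $\av(T')+\av(T_k)$ and $1+\sum_{j<k}\av(T_j)+\av(T_k-v_k)$ with weight $\M(T-e)/\M(T)$. Because every branch $T_j$ hangs off the same high-degree vertex, the quotient $\M(T-e)/\M(T)$ can be bounded below via the product formula \eqref{eq:Mte}--\eqref{eq:MT'} in terms of the ratios $\M(T_j-v_j)/\M(T_j)$, each branch can be bounded by the path constants $c_m$ individually, and the dangerous near-$1/\phi$ regime never arises in a single step. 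The price is a substantial case analysis ($k\ge4$ split by $|T_k|$, then $35$ computer-checked cases for $k=3$, with the configuration of three pendant $P_2$'s giving exact equality $\av(T)=a|T|+b$ --- which shows $b$ is tight and explains why your cruder route cannot have much room). If you want to salvage your approach you would need to actually carry out the strengthened induction you allude to; as written, the $d=2$ case is an unproven claim.
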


\begin{proof}
We prove the inequality by induction on $n$. For $n \leq 3$, there is nothing to prove since the only trees with three or fewer vertices are paths. Thus assume now that $n \geq 4$, and consider a vertex $v$ of the tree $T$ whose degree is at least $3$ (which must exist if $T$ is not a path). Denote the neighbors of $v$ by $v_1,v_2,\ldots,v_k$ and the components of $T - v$ by $T_1,T_2,\ldots,T_k$ (in such a way that $v_j$ is contained in $T_j$). Let $e$ be the edge between $v$ and $v_k$, and $T' = T - T_k$ be the tree obtained by removing $T_k$ from $T$. We have

\begin{align}
\av(T) &= \frac{\Si(T)}{\M(T)} = \frac{\Si(T-e) + \Si(T-v-v_k)+\M(T-v-v_k)}{\M(T)} \nonumber \\
&= \frac{\M(T-e)}{\M(T)} \cdot \frac{\Si(T-e)}{\M(T-e)} + \frac{\M(T-v-v_k)}{\M(T)} \cdot \Big(1 + \frac{\Si(T-v-v_k)}{\M(T-v-v_k)}\Big) \nonumber\\
&=\frac{\M(T-e)}{\M(T)}  \av(T-e) + \frac{\M(T) - \M(T-e)}{\M(T)} (1+ \av(T-v-v_k))\nonumber \\
&=\frac{\M(T-e)}{\M(T)}  (\av(T')+\av(T_k)) \label{eqpa} \\
&\qquad + \Big( 1 - \frac{\M(T-e)}{\M(T)} \Big) \Big(1 + \sum_{j=1}^{k-1} \av(T_j) + \av(T_k-v_k)\Big)\nonumber.
\end{align}
Set $A=\av(T')+\av(T_k)$ and $B=1 + \sum_{j=1}^{k-1} \av(T_j) + \av(T_k-v_k)$.

Assume first that $k \geq 4$. By Lemma~\ref{lem:path} and the induction hypothesis, we have $\av(T_j) \leq a|T_j| + \frac{1}{\sqrt{5}} - \frac12$ for all $j$ and $\av(T')\leq a|T'|+b$. It follows that
\[A \leq a(|T'|+|T_k|)+b+ \frac{1}{\sqrt{5}} - \frac12 = a |T| + b + \frac{1}{\sqrt{5}} - \frac12 < a|T|+b.\]

If $B \leq a|T|+b$, then we are done immediately. Hence we can assume that $A < a|T|+b \leq B$. This implies that the expression for $\av(T)$ in \eqref{eqpa} is decreasing regarded as a function of $\frac{\M(T-e)}{\M(T)}$, which means that we will need lower bounds for this quotient. So let us first find a formula for $\frac{\M(T-e)}{\M(T)}$. We observe that
$$\frac{\M(T-e)}{\M(T)}=\frac{\M(T')\M(T_k)}{\M(T')\M(T_k)+\M(T'-v)\M(T_k-v_k)},$$
thus
\begin{equation}\label{eq:Mte}
\frac{\M(T-e)}{\M(T)}=\Big(1+\frac{\M(T'-v)}{\M(T')}\cdot \frac{\M(T_k-v_k)}{\M(T_k)} \Big)^{-1}.
\end{equation}
Let us also find an expression for $\frac{\M(T'-v)}{\M(T')} $: 

\begin{align}
\frac{\M(T'-v)}{\M(T')}&=\frac{\prod_{j=1}^{k-1}\M(T_j)}{\prod_{j=1}^{k-1}\M(T_j)+\sum_{j=1}^{k-1} \M(T_j-v_j) \prod_{\substack{i=1 \\ i \neq j}}^{k-1} \M(T_i) }\nonumber\\
&= \Big(1+\sum_{j=1}^{k-1}\frac{\M(T_j-v_j)}{\M(T_j)}\Big)^{-1}.\label{eq:MT'}
\end{align}

We have to consider two different cases:

\medskip

\textbf{Case 1:} One of the $T_j$'s is the two-vertex path $P_2$. Then we can without loss of generality assume that $T_k=P_2$, so that $\av(T_k) = \frac12$ and $\av(T_k - v_k) = 0$. Let us distinguish two subcases depending on the number of other branches $T_j$ that are isomorphic to $P_2$.

\begin{itemize}

\item At least one of the $T_j$'s is different from $P_2$. We have
$$A \leq a|T|+b+\frac{1}{\sqrt{5}} - \frac12,$$
as it was established earlier. Moreover, by Lemma~\ref{lem:path} and the induction hypothesis,
$$\av(T_j) \leq a|T_j| + \frac{1}{\sqrt{5}} - \frac12$$
for all $j$, and
$$\av(T_j) \leq a|T_j| + \frac{2}{\sqrt{5}} - 1$$
if $T_j$ is different from $P_2$. Since this is the case for at least one index $j$, it follows that
\begin{align*}
B&= 1 + \sum_{j=1}^{k-1} \av(T_j) \\
&\leq 1+ \sum_{j=1}^{k-1} a|T_j| + (k-2) \left(\frac{1}{\sqrt{5}}-\frac12\right)+\frac{2}{\sqrt{5}}-1\\
&= a (|T| - 3) + (k-2) \left(\frac{1}{\sqrt{5}}-\frac12\right)+\frac{2}{\sqrt{5}} \\
&\leq a(|T|-3) + 2 \left(\frac{1}{\sqrt{5}}-\frac12\right)+\frac{2}{\sqrt{5}}\\
&= a|T|-3a+\frac{4}{\sqrt{5}}-1.
\end{align*}

Moreover, from the inequality $\frac{\M(T'-v)}{\M(T')} \leq 1 $ (see Lemma \ref{lem:quot1}) and the fact that $\frac{\M(T_k-v_k)}{\M(T_k)}= \frac{1}{2}$, using Equation \eqref{eq:Mte}, we obtain $\frac{\M(T-e)}{\M(T)} \geq \frac23$. Hence, \eqref{eqpa} gives us
\begin{align*}
\av(T)&\leq a|T| + \frac23\left(b+\frac{1}{\sqrt{5}} - \frac12\right)+\frac13\left(-3a+\frac{4}{\sqrt{5}}-1\right)\\
&= a|T|+\frac{29}{6\sqrt{5}}-\frac{23}{10}\approx a|T|-0.13847 < a|T|+b.
\end{align*}

\item All of the $T_j$'s are equal to $P_2$. In this case, we can determine $\M(T)$ and $\Si(T)$ explicitly (as functions of $k$ only) by means of Proposition~\ref{prop:recursions}:
$$\M(T) = 2^k + k2^{k-1}$$
and
$$\Si(T) = k2^{k-1} + k(k+1)2^{k-2},$$
thus
$$\av(T) = \frac{k^2+3k}{2k+4}.$$
Now one verifies easily that
$$\av(T) = \frac{k^2+3k}{2k+4} \leq a(2k+1)+b = a|T| + b$$
holds for all $k \geq 4$, completing the proof in Case 1.

\end{itemize}

\textbf{Case 2:} None of the $T_j$'s is a $2$-vertex path $P_2$.

By Lemma \ref{lem:quot1}, we have $\frac{\M(T'-v)}{\M(T')} \leq 1 $, and plugging this estimate into Equation \eqref{eq:Mte}, we obtain 
\begin{equation}\label{eq:Mt1}
\frac{\M(T-e)}{\M(T)}\geq \Big(1+\frac{\M(T_k-v_k)}{\M(T_k)} \Big)^{-1}.
\end{equation}

Let us distinguish different cases depending on the shape of $T_k$. We may assume that $T_k$ is the smallest branch, i.e.~$|T_k|=\min_{1\leq j\leq k}|T_j|$.

\begin{itemize}
\item If $|T_k|=1$, then $\av(T_k)=\av(T_k-v_k)=0$. It follows that
$$A = \av(T') \leq a|T'|+b=a|T|+b-a.$$
Moreover, since $\av(T_j) \leq |T_j| + \frac{2}{\sqrt{5}} - 1$ for every $j$ by the induction hypothesis and Lemma~\ref{lem:path} (and the assumption that none of the $T_j$ is a $2$-vertex path), we have
\begin{align*}
B &= 1 + \sum_{j=1}^{k-1} \av(T_j) \\
&\leq 1 + a \sum_{j=1}^{k-1} |T_j| + (k-1) \left(\frac{2}{\sqrt{5}}-1\right) \\
&\leq 1 + a (|T|-2) + 3 \left(\frac{2}{\sqrt{5}}-1\right) \\
&= a|T|-2a+\frac{6}{\sqrt{5}}-2.
\end{align*}
Since $\frac{\M(T_k-v_k)}{\M(T_k)}=1$, Equation \eqref{eq:Mt1} gives us $\frac{\M(T-e)}{\M(T)} \geq \frac12$. Thus,
\begin{align*}
\av(T)&\leq \frac12\left(a|T|+b-a\right)+\frac12\left(a|T|-2a+\frac{6}{\sqrt{5}}-2\right)\\
&= a|T|+\frac{11}{2\sqrt{5}}-\frac{13}{5}\approx a|T| -0.14033 < a|T|+b.
\end{align*}

\item If $|T_k|=3$, then $\av(T_k)=a|T_k|+\frac{3}{2\sqrt{5}}-\frac56$ and $\av(T_k-v_k)\leq a(|T_k|-1)+\frac{1}{\sqrt{5}}-\frac12$. In the same way as in the previous case, it follows that
\begin{align*}
A &\leq a|T|+b+\frac{3}{2\sqrt{5}}-\frac56,\\
B&\leq 1+a(|T|-2)+3\left(\frac{2}{\sqrt{5}}-1\right) + \frac{1}{\sqrt{5}}-\frac12= a|T|-2a+\frac{7}{\sqrt{5}}-\frac52.
\end{align*}
Since $\frac{\M(T_k-v_k)}{\M(T_k)} \leq \frac{2}{3}$, in this case Equation \eqref{eq:Mt1} gives us $\frac{\M(T-e)}{\M(T)} \geq \frac35$. We obtain
\begin{align*}
\av(T)&\leq a|T|+\frac35\left(b+\frac{3}{2\sqrt{5}}-\frac56\right)+\frac25\left(-2a+\frac{7}{\sqrt{5}}-\frac52\right)\\
&= a|T|+\frac{31}{5\sqrt{5}}-\frac{73}{25}\approx a|T|-0.14728 < a|T|+b.
\end{align*}

\item If $|T_k|=4$, then $\av(T_k)\leq a|T_k|+\frac{2}{\sqrt{5}}-1$ and $\av(T_k-v_k)\leq a(|T_k|-1)+\frac{3}{2\sqrt{5}}-\frac56$. In the same way as before, it follows that
\begin{align*}
A &\leq a|T|+b+\frac{2}{\sqrt{5}}-1,\\
B&\leq 1+a(|T|-2)+3\left(\frac{2}{\sqrt{5}}-1\right)+\frac{3}{2\sqrt{5}}-\frac56= a|T|-2a+\frac{3\sqrt{5}}{2}-\frac{17}{6}.
\end{align*}
Moreover, $\frac{\M(T_k-v_k)}{\M(T_k)} \leq \frac{3}{4}$ in this case, so using Equation \eqref{eq:Mt1} again, we get $\frac{\M(T-e)}{\M(T)} \geq \frac47$. Hence
\begin{align*}
\av(T)&\leq a|T|+\frac47\left(b+\frac{2}{\sqrt{5}}-1\right)+\frac37\left(-2a+\frac{15}{2\sqrt{5}}-\frac{17}{6}\right)\\
&= a|T|+\frac{19\sqrt{5}}{14}-\frac{223}{70}\approx a|T|-0.15105 < a|T|+b.
\end{align*}

\item If $|T_k|\geq 5$, then $\av(T_j)\leq a|T_j|+\frac{3}{\sqrt{5}}-\frac{19}{13}$ for all $j$ (by the induction hypothesis and Lemma~\ref{lem:path}, we have $\av(T_j) \leq a|T_j| + c_6 = a|T_j| + \frac{3}{\sqrt{5}}-\frac{19}{13}$ if $T_j$ is a path, and $\av(T_j) \leq a|T_j| + b \leq a|T_j| + \frac{3}{\sqrt{5}}-\frac{19}{13}$ otherwise) and $\av(T_k-v_k)\leq a(|T_k|-1)+\frac{2}{\sqrt{5}}-1$. So it follows now that
\begin{align*}
A &\leq a|T|+b+\frac{3}{\sqrt{5}}-\frac{19}{13},\\
B&\leq 1+a(|T|-2)+3\left(\frac{3}{\sqrt{5}}-\frac{19}{13}\right) +\frac{2}{\sqrt{5}}-1 = a|T|-2a+ \frac{11}{\sqrt{5}}-\frac{57}{13}.
\end{align*}

Since $\frac{\M(T_k-v_k)}{\M(T_k)}\leq 1$, we have $\frac{\M(T-e)}{\M(T)} \geq \frac12$ by~\eqref{eq:Mt1}. Thus,
\begin{align*}
\av(T)&\leq a|T| +\frac12 \left(b+\frac{3}{\sqrt{5}}-\frac{19}{13} -2a+ \frac{11}{\sqrt{5}}-\frac{57}{13} \right)\\
&= a|T|+\frac{37}{4\sqrt{5}}-\frac{1111}{260}\approx a|T|-0.13635 < a|T|+b.
\end{align*}

\end{itemize}

This completes the proof in the case that $k \geq 4$, so we are left with the case $k=3$. We return to the representation
\begin{align}\label{eq:mrep}
\av(T) &= \frac{\M(T-e)}{\M(T)} (\av(T')+\av(T_k)) \\
&\qquad + \Big(1-\frac{\M(T-e)}{\M(T)}\Big) \Big(1 + \sum_{j=1}^{k-1} \av(T_j) +\av(T_k-v_k)\Big)\nonumber. 
\end{align}

Plugging \eqref{eq:MT'} into Equation \eqref{eq:Mte}, we obtain
\begin{equation}\label{Mt}
\frac{\M(T-e)}{\M(T)}=\Big(1+\frac{1}{1+\sum_{j=1}^{k-1}\frac{\M(T_j-v_j)}{\M(T_j)}}\cdot \frac{\M(T_k-v_k)}{\M(T_k)} \Big)^{-1}.
\end{equation}

Now we distinguish different cases depending on how many of the branches $T_j$ have one, two, three, four and five or more vertices respectively. This gives us a total of $35$ cases corresponding to the solutions of
$$x_1+x_2+x_3+x_4+x_5 = 3.$$
Here, $x_1,x_2,x_3,x_4$ stand for the number of $T_j$'s with one, two, three, and four vertices respectively, and $x_5$ is the number of $T_j$'s with five or more vertices. In each of the cases, we use the following explicit values and bounds. The bounds and explicit values for $|T_j| \leq 4$ are obtained by an exhaustive case check, while the bounds for $|T_j| > 4$ follow from the induction hypothesis and Lemma~\ref{lem:path}.
$$\av(T_j) \begin{cases}
= a|T_j|-a &|T_j| = 1, \\
= a|T_j|+c_2 &|T_j| = 2, \\
= a|T_j|+c_3 &|T_j| = 3, \\
\leq a|T_j|+c_4&|T_j|=4,\\
\leq a|T_j| + c_6 & \text{otherwise,}
\end{cases}$$
$$\av(T_j-v_j) \begin{cases}
= a|T_j|-a &|T_j| = 1, \\
= a|T_j|-2a &|T_j| = 2, \\
\leq a(|T_j|-1)+c_2 &|T_j| = 3,\\
\leq a(|T_j|-1)+c_3 &|T_j| = 4,\\
\leq a(|T_j|-1)+c_4 & \text{otherwise.}
\end{cases}$$

We can assume that the degree of $v_j$ is at most $3$ for every $j$, since otherwise we can go back to the case that $k \geq 4$. Using this assumption, we have
$$\frac{\M(T_j-v_j)}{\M(T_j)} \begin{cases}
= 1 &|T_j| = 1, \\
= \frac12 &|T_j| = 2, \\
\in [\frac13,\frac23] &|T_j| = 3, \\
\in [\frac25, \frac34] &|T_j| = 4, \\
\in [\frac{4}{11},\frac34] & \text{otherwise.}
\end{cases}$$
The first four statements are obtained by checking all possible cases. For the last one, we use the recursion in~\eqref{eq:MT'} combined with Lemma \ref{lem:quot1}. Note first that $v_j$ has at most two neighbors in $T_j$, since its degree in $T$ is at most $3$. If there is only one neighbor, let $w$ be this neighbor, and set $S = T_j - v_j$. We have
$$\frac{\M(T_j-v_j)}{\M(T_j)} = \Big(1 + \frac{\M(S-w)}{\M(S)} \Big)^{-1}.$$
Applying Lemma~\ref{lem:quot1} to $S$ and $w$ yields $\frac13 \leq \frac{\M(S-w)}{\M(S)} \leq 1$ (if the degree of $w$ was greater than $2$, we could go back to the case $k \geq 4$ again), thus $\frac{\M(T_j-v_j)}{\M(T_j)} \in [\frac12,\frac34]$. If there are two neighbors $w_1$ and $w_2$, let $S_1$ and $S_2$ be the respective components of $T_j - v_j$. Since $\frac13 \leq \frac{\M(S_i-w_i)}{\M(S_i)} \leq 1$, we obtain
$$\frac{\M(T_j-v_j)}{\M(T_j)} = \Big(1 + \frac{\M(S_1-w_1)}{\M(S_1)}+\frac{\M(S_2-w_2)}{\M(S_2)}\Big)^{-1} \leq \frac{1}{1+\frac13 + \frac13} = \frac35$$
in this case, which readily proves the upper bound of $\frac34$ in all cases. To improve the lower bound even further, we can note that one of the two trees $S_1$ and $S_2$ has more than one vertex; without loss of generality, let this be $S_1$. Applying the same argument to $S_1$ as to $T_j$, we find
$\frac{\M(S_1-w_1)}{\M(S_1)} \leq \frac34$. Thus
$$\frac{\M(T_j-v_j)}{\M(T_j)} = \Big(1 + \frac{\M(S_1-w_1)}{\M(S_1)}+\frac{\M(S_2-w_2)}{\M(S_2)} \Big)^{-1} \geq \frac{1}{1+\frac34 + 1} = \frac{4}{11},$$
and we have also established the lower bound.

\medskip

Next we return to the representation~\eqref{eq:mrep}. By the induction hypothesis and Lemma~\ref{lem:path}, we have $\av(T')+\av(T_k)\leq (a|T'|+c_4)+(a|T_k|+c_2)<a|T|+b$. As before, if $1+\sum_{j=1}^{k-1}\av(T_j) + \av(T_j-v_j)\leq a|T|+b$, then we are done. So we may assume that 
$$\av(T')+\av(T_k)<a|T|+b\leq 1+\sum_{j=1}^{k-1}\av(T_j)+ \av(T_j-v_j).$$
Hence the expression~\eqref{eq:mrep} is linear and decreasing in $\frac{\M(T-e)}{\M(T)}$, its maximum is attained for the smallest possible value of $\frac{\M(T-e)}{\M(T)}$. 

\medskip

By the induction hypothesis, $\av(T')\leq \av(P_{|T'|}) = a|T'| + c_{|T'|}$. This inequality is plugged into~\eqref{eq:mrep} along with the bounds for $\av(T_j)$ and $\av(T_j-v_j)$. The identity~\eqref{Mt} is used to obtain a lower bound on the quotient $\frac{\M(T-e)}{\M(T)}$. All this gives us an upper bound for $\av(T)$ in each of the aforementioned $35$ cases, which can all be checked easily with a computer. The worst case happens when $x_1=x_3=x_4=x_5=0$ and $x_2=3$, where we have the equality $\av(T)=a|T|+b$. As another example to illustrate the general procedure, let us consider the case that gives us the second worst estimate: it is obtained for $x_1=x_3=x_5=0$, $x_2=2$ and $x_4=1$. Let $T_2$ and $T_3$ both have two vertices, so that the first branch $T_1$ consists of four vertices. We have
$$\av(T_3) = a|T_3|+ c_2,\ \av(T') \leq a|T'|+c_7,$$
thus
$$\av(T') + \av(T_3) \leq a|T| + c_2 + c_7 = a|T| + \frac{9}{2\sqrt{5}} - \frac{46}{21}.$$
Moreover, 
$$\av(T_3-v_3) = 0,\ \av(T_1) \leq 4a + c_4,\ \av(T_2) = \frac12,$$
and thus
\begin{align*}
1 + \sum_{j=1}^2 \av(T_j) + \av(T_3-v_3) \leq 1 + 4a+c_4 + \frac12 = a|T| + \frac{9}{2\sqrt{5}} - 2.
\end{align*}
Finally, we have
$$\frac{\M(T-e)}{\M(T)} = \Big(1 + \frac{\M(T_3-v_3)}{\M(T_3)} \cdot \frac{1}{1+\frac{\M(T_1-v_1)}{\M(T_1)}+\frac{\M(T_2-v_2)}{\M(T_2)}}\Big)^{-1} \geq
\Big(1+\frac12 \cdot \frac{1}{1+\frac12 + \frac25}\Big)^{-1} = \frac{19}{24}.$$
Putting everything together, we obtain
\begin{align*}
\av(T) &= \frac{\M(T-e)}{\M(T)} \big( \av(T') + \av(T_3) \big) \\
&\qquad + \left(1-\frac{\M(T-e)}{\M(T)}\right) \Big( 1 + \sum_{j=1}^2 \av(T_j) + \av(T_3-v_3)\Big) \\
&\leq \frac{\M(T-e)}{\M(T)} \Big( a|T| + \frac{9}{2\sqrt{5}} - \frac{46}{21} \Big) + \left(1-\frac{\M(T-e)}{\M(T)}\right) \Big(a|T| + \frac{9}{2\sqrt{5}} - 2 \Big) \\
&\leq \frac{19}{24} \Big( a|T| + \frac{9}{2\sqrt{5}} - \frac{46}{21} \Big) + \frac{5}{24} \Big(a|T| + \frac{9}{2\sqrt{5}} - 2 \Big) \\
&= a|T| + \frac{9}{2\sqrt{5}}-\frac{271}{126}\approx a|T| -0.13833 <a|T|+b.
\end{align*}
The other cases are treated in the same fashion and give upper bounds with smaller constant terms. Thus the induction is complete.
In order to complete the proof of the theorem, it only remains to prove an upper bound on $\av(P_n)$. However, we already know from Lemma~\ref{lem:path} that
\begin{align*}\av(P_n)  &= an +\frac{1-\sqrt{5}}{10} - \frac{n+1}{\sqrt{5}((-\phi^2)^{n+1}-1)} \\&\geq an +\frac{1-\sqrt{5}}{10} - \frac{6}{\sqrt{5}((-\phi^2)^{6}-1)} = an + \frac{\sqrt{5}}{2} - \frac{5}{4}
\end{align*}
for $n > 3$, and $\frac{\sqrt{5}}{2} - \frac{5}{4} \approx -0.131966 > b$. Thus $\av(P_n) > an + b \geq \av(T)$ for every tree $T$ with $n$ vertices other than $P_n$. This completes the proof.
\end{proof}

\section{Relations to other invariants}

In this section, we will prove inequalities between the average matching size and other matching-related quantities associated with a graph. 
Let $G$ be an $n$-vertex graph. The matching polynomial and the matching generating polynomial are defined as follows:
\begin{align*}
&\Phi(G,x)=\sum_{k\geq 0}\m(G,k)(-1)^k x^{n-2k},\\
&\M(G,x)=\sum_{k\geq 0}\m(G,k)x^k.
\end{align*}
Note that the average size of matchings in $G$ can be expressed as
\[\av(G)=\frac{\sum_{k\geq 0}k\m(G,k)}{\sum_{k\geq 0}\m(G,k)}=\frac{\M'(G,1)}{\M(G,1)},\]
where $\M'(G,x)$ is the first derivative of $\M(G,x)$ with respect to $x$.

It is easy to see that $\Phi(G,x) = x^n \M(G,-\frac{1}{x^2})$.  Using this relation, we can write the derivative of $\Phi$ in terms of $M$ and its derivative.

\begin{align*}
\Phi'(G,x)=nx^{n-1}\M\left(G,-\frac{1}{x^2}\right)+2x^{n-3}\M'\left(G,-\frac{1}{x^2}\right).
\end{align*}

This gives us
\begin{equation} \label{eq1}
\frac{\Phi'(G,x)}{\Phi(G,x)}=\frac{n}{x}+\frac{2}{x^3}\frac{\M'\left(G,-\frac{1}{x^2}\right)}{\M\left(G,-\frac{1}{x^2}\right)}.
\end{equation}

Let $\mu_1,\mu_2,\dots,\mu_n$ be the zeros of the matching polynomial $\Phi(G,x)$; it is well known that these zeros are real, see for example Section 8.5 in \cite{LovaszPlummer}. Now we can express $\Phi$ and $\Phi'$ in terms of the zeros as follows:

\begin{align*}
\Phi(G,x)&=\prod_{j=1}^n(x-\mu_j),\\
\Phi'(G,x)&=\sum_{k=1}^n \frac{\prod_{j=1}^n(x-\mu_j)}{x-\mu_k}.
\end{align*}

Therefore,

\begin{equation}\label{eq2}
\frac{\Phi'(G,x)}{\Phi(G,x)}=\sum_{k=1}^n\frac{1}{x-\mu_k}.
\end{equation}

Now, we can establish a relation between the average size of matchings of $G$ and the zeros of its matching polynomial.

\begin{lem}\label{lem1ww}
Let $G$ be an $n$-vertex graph and $\mu_1,\dots,\mu_n$ be the zeros of the matching polynomial of $G$. Then
\[\av(G)=\frac12\sum_{j=1}^n\frac{\mu_j^2}{\mu_j^2+1}.\]
\end{lem}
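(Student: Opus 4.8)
The plan is to evaluate the two expressions for $\frac{\Phi'(G,x)}{\Phi(G,x)}$ obtained in \eqref{eq1} and \eqref{eq2} at the single complex point $x=i$ (with $i^2=-1$) and to compare real and imaginary parts. This point is chosen precisely because $-\frac{1}{x^2}=1$ there, so that the quotient $\frac{\M'(G,-1/x^2)}{\M(G,-1/x^2)}$ appearing in \eqref{eq1} becomes $\frac{\M'(G,1)}{\M(G,1)}=\av(G)$. The substitution is legitimate: since the zeros $\mu_1,\dots,\mu_n$ are real, we have $i-\mu_k\neq 0$ for every $k$, hence $\Phi(G,i)=\prod_{k=1}^n(i-\mu_k)\neq 0$, so $x=i$ is a pole of neither side of the identity.

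Evaluating \eqref{eq1} at $x=i$ and using $\frac{1}{i}=-i$ together with $\frac{2}{i^3}=2i$, I would obtain
\[\frac{\Phi'(G,i)}{\Phi(G,i)}=-ni+2i\,\av(G)=i\bigl(2\av(G)-n\bigr),\]
which is purely imaginary. On the other hand, rationalizing each summand in \eqref{eq2} gives $\frac{1}{i-\mu_k}=\frac{-\mu_k-i}{\mu_k^2+1}$, so that
\[\frac{\Phi'(G,i)}{\Phi(G,i)}=\sum_{k=1}^n\frac{1}{i-\mu_k}=-\sum_{k=1}^n\frac{\mu_k}{\mu_k^2+1}-i\sum_{k=1}^n\frac{1}{\mu_k^2+1}.\]

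Comparing the imaginary parts of these two expressions yields $2\av(G)-n=-\sum_{k=1}^n\frac{1}{\mu_k^2+1}$, and rewriting $n=\sum_{k=1}^n 1$ gives
\[2\av(G)=\sum_{k=1}^n\Bigl(1-\frac{1}{\mu_k^2+1}\Bigr)=\sum_{k=1}^n\frac{\mu_k^2}{\mu_k^2+1},\]
which is the claimed identity after division by $2$. (Comparing real parts gives the consistency check $\sum_{k=1}^n\frac{\mu_k}{\mu_k^2+1}=0$, reflecting the symmetry of the zeros of $\Phi(G,x)$ about the origin.) I do not anticipate a serious obstacle here: the argument is essentially a single well-chosen evaluation, and the only points needing care are the complex bookkeeping at $x=i$ and the justification that this evaluation is valid, both of which are routine.
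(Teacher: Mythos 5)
Your proposal is correct and follows essentially the same route as the paper: both evaluate the two expressions for $\Phi'(G,x)/\Phi(G,x)$ at $x=i$ and extract the identity by separating real and imaginary parts (the paper first rewrites the sum as $\sum_j \mu_j/(\mu_j-i)=2\av(G)$ and takes real parts, which is the same computation up to a factor of $i$). Your explicit check that $x=i$ is not a pole because the $\mu_j$ are real is a small point the paper leaves implicit.
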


\begin{proof}
Using \eqref{eq1} and \eqref{eq2}, and plugging in $x=i$, we obtain
$$\sum_{j=1}^n\frac{1}{i-\mu_j} = \frac{n}{i}+\frac{2}{i^3}\frac{\M'\left(G,1\right)}{\M\left(G,1\right)},$$
and this simplifies to
\begin{equation}\label{eq3}
\sum_{j=1}^n\frac{\mu_j}{\mu_j-i} = 2\av(G).
\end{equation}

Let us rearrange the left hand side of Equation \eqref{eq3}. We have
\[\sum_{j=1}^n\frac{\mu_j}{\mu_j-i}=\sum_{j=1}^n\frac{\mu_j(\mu_j +i)}{(\mu_j-i)(\mu_j+i)}=\sum_{j=1}^n\frac{\mu_j^2+i\mu_j}{\mu_j^2+1}.\]

Since the imaginary part must be $0$, we get the desired result.
\end{proof}

Having established this relation, we can now prove two inequalities. The first relates the average matching size with the total number of matchings. Note that the latter is $\M(G) = \M(G,1)$, which can be expressed in terms of the zeros $\mu_1,\dots,\mu_n$ as well:
$$\M(G) = \M(G,1) = |\M(G,1)| = |i^{-n} \Phi(G,i)| = \Big| \prod_{j=1}^n (i-\mu_j) \Big| = \prod_{j=1}^n \sqrt{1+\mu_j^2}.$$
It is not difficult to verify that the inequality
$$\frac{x}{1+x} \leq \beta \log(1+x) + 1 - \beta + \beta \log \beta$$
holds for all positive real numbers $\beta$ and $x$. Plugging in $\mu_j^2$ for $x$ and summing over all $j$ yields the following result:

\begin{prop}
For every postive real number $\beta$ and every $n$-vertex graph $G$, we have
$$\av(G) \leq \beta \log \M(G) + \big( 1-\beta+\beta \log \beta \big) \frac{n}{2}.$$
In particular,
$$\av(G) \leq \log \M(G).$$
\end{prop}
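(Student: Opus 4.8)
The plan is to reduce the entire statement to the single-variable inequality $\frac{x}{1+x} \le \beta\log(1+x) + 1 - \beta + \beta\log\beta$ quoted just before the proposition, since the structural work has effectively already been carried out. Indeed, Lemma~\ref{lem1ww} expresses $2\av(G)$ as $\sum_{j=1}^n \frac{\mu_j^2}{\mu_j^2+1}$, while the product representation $\M(G) = \prod_{j=1}^n\sqrt{1+\mu_j^2}$ yields $2\log\M(G) = \sum_{j=1}^n \log(1+\mu_j^2)$. Thus, once the elementary inequality is in place, I would substitute $x = \mu_j^2$, sum over $j = 1,\dots,n$, and read off the claim after dividing by $2$.

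First I would verify the elementary inequality. Fixing $\beta > 0$ and setting $f(x) = \frac{x}{1+x} - \beta\log(1+x)$ on $[0,\infty)$, one computes $f'(x) = \frac{1}{1+x}\big(\frac{1}{1+x} - \beta\big)$. When $\beta \le 1$, the unique critical point is $x = \frac{1}{\beta} - 1 \ge 0$, a maximum, with $f\big(\frac{1}{\beta}-1\big) = (1-\beta) - \beta\log\frac{1}{\beta} = 1 - \beta + \beta\log\beta$; when $\beta \ge 1$, the factor $\frac{1}{1+x}-\beta$ is nonpositive throughout, so $f$ is nonincreasing and its maximum is $f(0) = 0$, which is at most $1 - \beta + \beta\log\beta$ because $g(\beta) = 1 - \beta + \beta\log\beta$ has $g'(\beta) = \log\beta$ and hence $g(\beta) \ge g(1) = 0$ for all $\beta > 0$. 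In either regime $f(x) \le 1 - \beta + \beta\log\beta$ on the whole domain, which is precisely the stated inequality.

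With this established, I would plug in $x = \mu_j^2$ for each $j$ (legitimate because the zeros $\mu_j$ are real, so $\mu_j^2 \ge 0$), obtaining $\frac{\mu_j^2}{1+\mu_j^2} \le \beta\log(1+\mu_j^2) + 1 - \beta + \beta\log\beta$. Summing over $j$ and invoking the two identities above gives $2\av(G) \le 2\beta\log\M(G) + n(1-\beta+\beta\log\beta)$, and dividing by $2$ produces the stated bound. The \emph{in particular} assertion is then the specialization $\beta = 1$, for which the coefficient $1 - \beta + \beta\log\beta$ vanishes.

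I expect essentially no serious obstacle: the conceptual weight lies entirely in Lemma~\ref{lem1ww} and the product formula for $\M(G)$, both already proved. The only point requiring genuine care is the elementary inequality, and even there the sole subtlety is treating the two regimes $\beta \le 1$ and $\beta \ge 1$ separately—the interior maximum exists only in the former—together with the routine fact that $1 - \beta + \beta\log\beta \ge 0$ for every $\beta > 0$.
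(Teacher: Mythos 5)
Your proposal is correct and follows essentially the same route as the paper: the paper likewise combines Lemma~\ref{lem1ww} with the product formula $\M(G)=\prod_j\sqrt{1+\mu_j^2}$, substitutes $x=\mu_j^2$ into the elementary inequality $\frac{x}{1+x}\leq\beta\log(1+x)+1-\beta+\beta\log\beta$, and sums over $j$. The only difference is that the paper leaves that elementary inequality as ``not difficult to verify,'' whereas you supply the calculus verification (correctly, including the two regimes $\beta\le 1$ and $\beta\ge 1$ and the nonnegativity of $1-\beta+\beta\log\beta$).
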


We can still choose $\beta$ arbitrarily. Differentiating with respect to $\beta$, we find that the optimal value for $\beta$ (that minimizes the upper bound) is $\beta = \M(G)^{-2/n}$. Plugging this back into the inequality, we obtain the following theorem:

\begin{thm}
For every $n$-vertex graph, we have
$$\av(G) \leq \frac{n}{2} \Big( 1 - \M(G)^{-2/n} \Big).$$
\end{thm}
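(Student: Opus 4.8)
The plan is to exploit the fact that the preceding Proposition supplies not a single inequality but an entire one-parameter family of upper bounds for $\av(G)$, valid for every positive real $\beta$. Since the left-hand side $\av(G)$ does not depend on $\beta$, the sharpest consequence is obtained by choosing $\beta$ so as to minimize the right-hand side. Thus I would treat the right-hand side as a function of $\beta$ and carry out an elementary one-variable optimization, then substitute the optimal $\beta$ back in.

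Concretely, write $g(\beta) = \beta \log \M(G) + \big(1 - \beta + \beta \log \beta\big)\frac{n}{2}$ for $\beta > 0$. Differentiating with respect to $\beta$ and using $\frac{d}{d\beta}\big(\beta\log\beta\big) = \log\beta + 1$, the constant and linear contributions cancel and one is left with $g'(\beta) = \log \M(G) + \frac{n}{2}\log\beta$. Setting $g'(\beta)=0$ gives $\log\beta = -\frac{2\log\M(G)}{n}$, that is, $\beta = \M(G)^{-2/n}$, exactly the value announced in the text. Since $g''(\beta) = \frac{n}{2\beta} > 0$, the function $g$ is strictly convex on $(0,\infty)$, so this critical point is the unique global minimizer; this is the only point at which any care is needed, and it is immediate. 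Note also that $\M(G) \geq 1$ always (the empty matching is counted), so $\beta = \M(G)^{-2/n}$ is a legitimate positive value, and the boundary case $\M(G)=1$ reduces to $\av(E_n)=0$.

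The final step is the substitution, where a clean cancellation occurs. Writing $\gamma = \M(G)^{-2/n}$, we have $\log\gamma = -\frac{2\log\M(G)}{n}$, hence $\frac{n}{2}\,\gamma\log\gamma = -\gamma\log\M(G)$. This term cancels precisely against $\gamma\log\M(G)$ coming from the first summand of $g(\gamma)$, leaving $g(\gamma) = \frac{n}{2}\big(1 - \gamma\big) = \frac{n}{2}\big(1 - \M(G)^{-2/n}\big)$. Combined with the Proposition's bound $\av(G) \leq g(\gamma)$, this yields the claimed inequality. There is essentially no hard step here: the result is a routine convex optimization of the free parameter in an already-established inequality, and the only thing worth verifying explicitly is the convexity that guarantees the critical point is a genuine global minimum.
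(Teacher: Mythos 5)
Your proposal is correct and follows exactly the route the paper takes: optimize the free parameter $\beta$ in the preceding proposition, find the minimizer $\beta = \M(G)^{-2/n}$, and substitute back to get the cancellation leaving $\frac{n}{2}\bigl(1-\M(G)^{-2/n}\bigr)$. The only addition in the paper is a remark that the same theorem can alternatively be derived from the arithmetic--geometric mean inequality, but your argument is complete as it stands.
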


An alternative way to prove this theorem is to apply the inequality between the arithmetic and the geometric mean.

We conclude this section with a similar inequality involving the matching energy. This invariant is defined as follows \cite{gutman4}:

\[\ME(G)=\sum_{j=1}^n|\mu_j|.\]

Following an analogous approach, we can prove a relation between the average size of matchings in $G$ and the matching energy of $G$.

\begin{thm}\label{thm:MEineq}
For every graph $G$,
\[\ME(G)\geq 4\av(G).\]
\end{thm}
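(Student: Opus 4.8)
The plan is to reduce the claim to a pointwise inequality between individual terms, exactly in the spirit of the proof of the previous theorem (where the pointwise bound $\frac{x}{1+x} \leq \beta \log(1+x) + 1 - \beta + \beta \log\beta$ was applied with $x = \mu_j^2$). By Lemma~\ref{lem1ww} we have $\av(G) = \frac12 \sum_{j=1}^n \frac{\mu_j^2}{\mu_j^2+1}$, so the desired inequality $\ME(G) \geq 4\av(G)$ is equivalent to
$$\sum_{j=1}^n |\mu_j| \geq 2 \sum_{j=1}^n \frac{\mu_j^2}{\mu_j^2+1}.$$
It therefore suffices to establish the single-variable inequality $|\mu_j| \geq \frac{2\mu_j^2}{\mu_j^2+1}$ for each index $j$ and then sum over all $j$.

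First I would verify this pointwise bound. Writing $t = |\mu_j| \geq 0$, the claim $t \geq \frac{2t^2}{t^2+1}$ holds trivially for $t = 0$, and for $t > 0$ it is equivalent, after multiplying through by the positive quantity $\frac{t^2+1}{t}$, to $t^2 + 1 \geq 2t$, that is, $(t-1)^2 \geq 0$. This is always true. Summing the resulting inequalities over $j = 1, \dots, n$ and invoking Lemma~\ref{lem1ww} on the right-hand side yields the theorem immediately.

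I do not expect any genuine obstacle here once Lemma~\ref{lem1ww} is in hand: the entire argument rests on the elementary estimate $(|\mu_j| - 1)^2 \geq 0$. The only points worth recording explicitly are that all zeros $\mu_j$ of the matching polynomial are real (established earlier in the section), so that $|\mu_j|$ and $\frac{\mu_j^2}{\mu_j^2+1}$ are well defined and the pointwise inequality genuinely applies, and that equality throughout forces $|\mu_j| \in \{0,1\}$ for every $j$. The latter observation could be used to characterize the graphs attaining equality, should a sharper statement be desired.
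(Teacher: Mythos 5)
Your proposal is correct and follows essentially the same route as the paper: both reduce the statement, via Lemma~\ref{lem1ww}, to the elementary pointwise bound $\frac{t^2}{1+t^2}\leq\frac{t}{2}$ (equivalently $(t-1)^2\geq 0$) applied to $t=|\mu_j|$ and summed over $j$. Your additional remark on when equality can occur is a small bonus not present in the paper.
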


\begin{proof}
For all nonnegative real $x$, we have $\frac{x^2}{1+x^2} \leq \frac{x}{2}$. Therefore, by Lemma~\ref{lem1ww},
$$\av(G) = \frac12 \sum_{j=1}^n \frac{\mu_j^2}{1+\mu_j^2} \leq \frac12 \sum_{j=1}^n \frac{|\mu_j|}{2} = \frac14 \ME(G).$$
\end{proof}

\begin{rem}
Note that in the case of trees, the matching polynomial coincides with the characteristic polynomial. So we have a correspondence between the average size of matchings of a tree and the classical energy of a tree, which is the sum of the absolute values of the eigenvalues, see \cite{li2012}.
\end{rem}

\section{The weighted average size of matchings in a graph}

In the context of the monomer-dimer model from statistical physics, one often considers a probability distribution on the set of matchings where the probability of a $k$-matching is proportional to $\A^k$ for some constant $\A$, see for example \cite{davies2015}. This provides the motivation to study the weighted average size of matchings. We consider a random matching according to the aforementioned probability distribution, where $\A$ is a fixed positive number. We define the weighted total number of matchings in $G$, the weighted total size of $G$ and the weighted average size of matchings in $G$ as follows:
\begin{align*}
&\M^{\A}(G)=\sum_{k\geq 0}\m(G,k)\A^k,\\
&\Si^{\A}(G)=\sum_{k\geq 0}k\m(G,k)\A^k,\\
&\av^{\A}(G)=\frac{\Si^{\A}(G)}{\M^{\A}(G)}.
\end{align*}

Following a similar reasoning as in the special case where $\A = 1$, it is still possible to prove the following inequalities.
\begin{thm} For every fixed positive real number $\A$ and every $n$-vertex graph $G$, we have
$$\av^{\A} (E_n) \leq \av^{\A}(G)\leq \av^{\A}(K_n).$$ 
Moreover, for every real number $\A \in (0,1]$ and every $n$-vertex tree $T$, we have $$\av^{\A}(S_n) \leq \av^{\A}(T) \leq \av^{\A}(P_n).$$
\end{thm}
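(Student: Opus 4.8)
The plan is to obtain all four inequalities by transplanting the arguments already developed for the case $\A=1$, noting that nearly all of the underlying machinery survives the introduction of the weight $\A^k$ with only cosmetic changes. It is worth separating the roles of $\A$ at the outset: the chain $\av^{\A}(E_n)\le\av^{\A}(G)\le\av^{\A}(K_n)$ and the star bound $\av^{\A}(S_n)\le\av^{\A}(T)$ will hold for \emph{every} $\A>0$, and the hypothesis $\A\in(0,1]$ is needed only for the path bound $\av^{\A}(T)\le\av^{\A}(P_n)$.

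For the general graph bounds the lower inequality is immediate, since $\av^{\A}(E_n)=0$ while every edge of a nonempty $G$ contributes a positive summand to $\Si^{\A}(G)$. For the upper bound I would introduce the weighted partial quantities $\M_k^{\A}(G)=\sum_{i=0}^k\m(G,i)\A^i$, $\Si_k^{\A}(G)=\sum_{i=0}^k i\,\m(G,i)\A^i$, and $\av_k^{\A}=\Si_k^{\A}/\M_k^{\A}$, and then follow the proof of Theorem~\ref{Th:1} line by line. The crucial observation is that Lemmas~\ref{Lem:1} and~\ref{Lem:2} are assertions about the counts $\m(G,k)$ alone and do not mention $\A$; multiplying the inequality $\frac{\m(K_n,i)}{\m(K_n,k+1)}\le\frac{\m(G,i)}{\m(G,k+1)}$ of Lemma~\ref{Lem:2} by $\A^{\,i-k-1}>0$ and summing over $i\le k$ yields the weighted comparison that bounds $\M_k^{\A}(K_n)/\bigl(\m(K_n,k+1)\A^{k+1}\bigr)$ from above by its $G$-counterpart. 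Since $\av_k^{\A}$ is a weighted average of the values $0,1,\dots,k$, the analogue of Lemma~\ref{Lem:0} and the inequality $\av_k^{\A}(K_n)\le k<k+1$ both persist, so the computation~\eqref{Eq:1} and the monotonicity of $x\mapsto\frac{(k+1)+\av_k^{\A}(K_n)x}{1+x}$ carry over unchanged for all $\A>0$.

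The star bound I would prove directly rather than inductively. As $\m(T,1)=n-1$ for every tree, one has $\av^{\A}(S_n)=\frac{(n-1)\A}{1+(n-1)\A}$, and after cross-multiplying the claim $\av^{\A}(T)\ge\av^{\A}(S_n)$ becomes
$$\Si^{\A}(T)\bigl(1+(n-1)\A\bigr)-(n-1)\A\,\M^{\A}(T)=\sum_{k\ge2}k\,\m(T,k)\A^k+(n-1)\A\sum_{k\ge2}(k-1)\,\m(T,k)\A^k\ge0,$$
which is manifestly true for every $\A>0$, with equality precisely when $\m(T,k)=0$ for all $k\ge2$, i.e.\ when $T$ is a star. (The same computation applies to any connected graph.)

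The genuinely delicate part, and the one I expect to be the main obstacle, is the path bound $\av^{\A}(T)\le\av^{\A}(P_n)$. The clean route that worked for the complete graph---finding a ratio domination of the form of Lemma~\ref{Lem:2} between $T$ and $P_n$---is unavailable, because the path does not dominate all trees in the matching counts; this is exactly why an intricate induction is required and why a restriction on $\A$ appears. I would mirror the inductive proof of the path theorem, first recording the weighted recursions $\M^{\A}(G)=\M^{\A}(G-v)+\A\suv\M^{\A}(G-v-u)$ and its $\Si^{\A}$-analogue from Proposition~\ref{prop:recursions}, the weighted form $\frac{1}{1+\A d(v)}\le\frac{\M^{\A}(T-v)}{\M^{\A}(T)}\le1$ of Lemma~\ref{lem:quot1}, and the additive behaviour of $\av^{\A}$ on disjoint unions. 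Decomposing at a vertex of degree at least $3$ expresses $\av^{\A}(T)$ as a convex combination controlled by $\frac{\M^{\A}(T-e)}{\M^{\A}(T)}=\bigl(1+\A\frac{\M^{\A}(T'-v)}{\M^{\A}(T')}\cdot\frac{\M^{\A}(T_k-v_k)}{\M^{\A}(T_k)}\bigr)^{-1}$, and in the decisive cases this combination is decreasing in that quotient, so one needs lower bounds on it. Here the hypothesis $\A\le1$ is exactly what is used: since both matching quotients lie in $(0,1]$, the displayed identity gives $\frac{\M^{\A}(T-e)}{\M^{\A}(T)}\ge\frac{1}{1+\A}\ge\frac12$ when $\A\le1$, whereas for $\A>1$ this quotient can fall below $\frac12$ and the estimates collapse. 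The remaining work---reproducing the finitely many base and branch cases of the original argument with the now $\A$-dependent path values $\av^{\A}(P_m)$ in place of the constants $c_m$, and checking that each resulting numerical inequality holds uniformly for all $\A\in(0,1]$ rather than at the single point $\A=1$---is routine in spirit but is where the real effort lies.
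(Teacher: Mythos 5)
Your proposal takes essentially the same route the paper intends: the paper gives no proof of this theorem itself, stating only that the argument follows the same reasoning as the case $\A=1$ and deferring details to \cite{Valisoa}, and your adaptation of Lemmas~\ref{Lem:0}--\ref{Lem:2} and Theorem~\ref{Th:1} for the complete-graph bound, the direct cross-multiplication for the star bound, and the weighted path induction (with the hypothesis $\A\le 1$ entering through the lower bound on $\M^{\A}(T-e)/\M^{\A}(T)$) is exactly that adaptation. The outline is correct; the one part you leave unverified, namely the uniform-in-$\A$ check of the finitely many cases in the path induction, is likewise not carried out in the paper.
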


We refer to \cite{Valisoa} for more details on the proof.  Note that the final inequality ($\av^{\A}(T) \leq \av^{\A}(P_n)$) is not generally true for all values of $\alpha$. One can also express the weighted average matching size in terms of the zeros of the matching polynomial:
\begin{lem}
Let $G$ be an $n$-vertex graph and $\mu_1,\dots,\mu_n$ be the zeros of the matching polynomial of $G$. Then
\[\av^{\A}(G)=\frac12\sum_{j=1}^n\frac{\A\mu_j^2}{\A\mu_j^2+1}.\]
\end{lem}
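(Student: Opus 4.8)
The plan is to mimic the proof of Lemma~\ref{lem1ww} almost verbatim, replacing the evaluation point $x=i$ by one adapted to the weight $\A$. First I would record the elementary generating-function identities $\M^{\A}(G)=\M(G,\A)$ and $\Si^{\A}(G)=\A\,\M'(G,\A)$, so that
\[
\av^{\A}(G)=\frac{\Si^{\A}(G)}{\M^{\A}(G)}=\frac{\A\,\M'(G,\A)}{\M(G,\A)}.
\]
This reduces the task to evaluating the logarithmic derivative of $\M(G,\cdot)$ at the point $\A$.

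Next I would exploit the relation $\Phi(G,x)=x^n\M(G,-1/x^2)$ together with identity~\eqref{eq1}, which already expresses $\Phi'/\Phi$ in terms of $\M'/\M$. The key observation is that choosing $x$ with $-1/x^2=\A$, i.e.\ $x=i/\sqrt{\A}$, makes the argument of $\M$ equal to $\A$ (the alternative root $x=-i/\sqrt{\A}$ leads to the complex-conjugate identity and the same conclusion, since only $\mu_j^2$ will survive). Substituting $x=i/\sqrt{\A}$ into \eqref{eq1} and \eqref{eq2}, simplifying the powers of $i$ and $\sqrt{\A}$, and dividing through by $\sqrt{\A}$ should yield
\[
\sum_{j=1}^n\frac{1}{i-\sqrt{\A}\,\mu_j}=-ni+2i\,\av^{\A}(G).
\]

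From this point the argument is exactly that of Lemma~\ref{lem1ww} with $\mu_j$ replaced by $\nu_j:=\sqrt{\A}\,\mu_j$. Because $\A>0$ and the zeros $\mu_j$ are real, each $\nu_j$ is real, so the rearrangement
\[
\sum_{j=1}^n\frac{\nu_j}{\nu_j-i}=\sum_{j=1}^n\frac{\nu_j^2+i\nu_j}{\nu_j^2+1}=2\,\av^{\A}(G)
\]
is valid, and comparing real parts — the imaginary part $\sum_j \nu_j/(\nu_j^2+1)$ must vanish since the right-hand side is real — gives $\sum_j \nu_j^2/(\nu_j^2+1)=2\,\av^{\A}(G)$. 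Writing $\nu_j^2=\A\mu_j^2$ yields the claimed formula.

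The only genuine difficulty is bookkeeping rather than conceptual: one must keep careful track of $i^{-1}=-i$, $i^{-3}=i$ and the various powers of $\sqrt{\A}$ arising from the substitution $x=i/\sqrt{\A}$, and one must check that the reality of the $\nu_j$ (guaranteed by $\A>0$ and the real-rootedness of the matching polynomial) is what legitimizes the final passage to real parts. No new idea beyond the $\A=1$ case is required.
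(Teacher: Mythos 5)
Your proposal is correct: the substitution $x=i/\sqrt{\A}$ in \eqref{eq1} and \eqref{eq2} does give $\sum_j 1/(i-\sqrt{\A}\mu_j)=-ni+2i\av^{\A}(G)$ (using $\Si^{\A}(G)=\A\M'(G,\A)$), and taking real parts with $\nu_j=\sqrt{\A}\mu_j$ yields the stated formula. This is exactly the intended argument — the paper states the lemma without proof as the direct analogue of Lemma~\ref{lem1ww}, and your derivation is that analogue carried out faithfully.
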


Finally, it is also possible again to prove inequalities that relate $\av^{\A}(G)$ to other invariants. Specifically, we have the following straightforward generalization of Theorem~\ref{thm:MEineq}:
\begin{thm}
For every graph $G$ and every positive real number $\A$,
\[\ME(G)\geq \frac{4}{\sqrt{\A}}\av^{\A}(G).\]
\end{thm}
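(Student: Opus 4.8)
The plan is to mirror the proof of Theorem~\ref{thm:MEineq} exactly, replacing the pointwise inequality $\frac{x^2}{1+x^2}\le\frac{x}{2}$ with its weighted analogue. First I would establish the elementary scalar inequality
\begin{equation*}
\frac{\A x^2}{1+\A x^2}\le \frac{\sqrt{\A}\,|x|}{2}\qquad\text{for all real }x,
\end{equation*}
which is what the weighted version of the energy bound requires. This is again a one-variable inequality: writing $t=\sqrt{\A}\,|x|\ge 0$, the left-hand side is $\frac{t^2}{1+t^2}$ and the claim becomes $\frac{t^2}{1+t^2}\le\frac{t}{2}$, i.e.\ exactly the inequality already used in the proof of Theorem~\ref{thm:MEineq}. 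So no new analytic work is needed beyond the substitution $t=\sqrt{\A}|x|$.

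Next I would invoke the weighted analogue of Lemma~\ref{lem1ww} (the immediately preceding lemma in this section), which expresses the weighted average as $\av^{\A}(G)=\frac12\sum_{j=1}^n\frac{\A\mu_j^2}{1+\A\mu_j^2}$. Applying the scalar inequality above termwise with $x=\mu_j$ gives
\begin{equation*}
\av^{\A}(G)=\frac12\sum_{j=1}^n\frac{\A\mu_j^2}{1+\A\mu_j^2}
\le\frac12\sum_{j=1}^n\frac{\sqrt{\A}\,|\mu_j|}{2}
=\frac{\sqrt{\A}}{4}\sum_{j=1}^n|\mu_j|
=\frac{\sqrt{\A}}{4}\ME(G).
\end{equation*}
Rearranging yields $\ME(G)\ge\frac{4}{\sqrt{\A}}\av^{\A}(G)$, which is the claimed bound. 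Note that for $\A=1$ this recovers Theorem~\ref{thm:MEineq} verbatim, as it should.

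There is really no serious obstacle here: the entire argument is a weighted recycling of the unweighted proof, and the only thing to verify carefully is that the substitution $t=\sqrt{\A}|x|$ correctly transports the inequality and that the factor of $\sqrt{\A}$ lands on the energy side rather than the average side. The one point worth double-checking is the statement of the weighted version of Lemma~\ref{lem1ww}, since the whole argument hinges on the denominator being $1+\A\mu_j^2$ (not, say, $\A+\mu_j^2$); granting the lemma as stated, the computation is immediate.
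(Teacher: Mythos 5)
Your proof is correct and is exactly the argument the paper intends: it applies the weighted analogue of Lemma~\ref{lem1ww} together with the scalar inequality $\frac{t^2}{1+t^2}\le\frac{t}{2}$ under the substitution $t=\sqrt{\A}\,|\mu_j|$, just as in the proof of Theorem~\ref{thm:MEineq}. The factor $\sqrt{\A}$ is placed correctly, so nothing further is needed.
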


\bibliographystyle{abbrv}

\end{document}